  \newcommand{\Ell}{L_{2}}
 \newcommand{\h}{\mathfrak{H}}
 \newcommand{\E}{\mathbf{E}}
\newcommand{\U}{\mathcal{U}}
 \newcommand{\N}{\mathbb{N}}
 \newcommand{\BH}{\mathcal{B}(\mathfrak{H})}
 \renewcommand{\ker}{\operatorname{ker}}
\newcommand{\Rang}{\operatorname{Rang}}
 \newcommand{\Real}{\mathbb{R}}
 \newcommand{\Complex}{\mathbb{C}}
 \newcommand{\Ele}{L_{2}}
\theoremstyle{statement}
\newtheorem{theorem}{Theorem}[section]
\newtheorem{proposition}[theorem]{Proposition}
\newtheorem{definition}[theorem]{Definition}
\newtheorem{remark}[theorem]{Remark}
\newtheorem{example}[theorem]{Example}
\numberwithin{equation}{section}
\journal{xxxxxxxx}
\begin{document}

\begin{frontmatter}

\title{Frames of subspaces in Hilbert spaces with $W$-metrics}

\author[primitivo]{Primitivo Acosta-Hum\'anez }
\address[primitivo]{Department of Mathematics, Universidad del Norte, Colombia.\\ Email: pacostahumanez@uninorte.edu.co}

\author[Esmeral]{Kevin Esmeral }

\address[Esmeral]{ Department of Mathematics, Centro de Investigaciones y Estudios Avanzados (CINVESTAV), M\'exico.\\ Email: kmesmeral@math.cinvestav.mx}
\author[Ferrer]{Osmin Ferrer }

\address[Ferrer]{ Department of mathematics, Universidad Surcolombiana, Colombia.\\ Email: osmin.ferrer@usco.edu.co}

\begin{abstract}
If $\left(\h,\langle\cdot,\cdot\rangle\right)$ is a Hilbert space and on it we consider the sesquilinear form $\langle\,W\cdot,\cdot\rangle$ so-called $W$-metric, where $W^{*}=W\in\BH$, and $\ker\,W=\{0\}$, then the space $\left(\h,\langle\,W\cdot,\cdot\rangle\right)$ is called Hilbert space with $W$-metric or simply $W$-space. In this paper we investigate the dynamic of frames of subspace on these spaces, where the sense of dynamics refers to the behavior of frames of subspace in $\h_{W}$ (the completion of $\left(\h,\langle\,W\cdot,\cdot\rangle\right)$) comparing with $\h$  and vice versa.
This work is based on the study made in \cite{KEFER,GMMM} on frames in Krein spaces. In a similar way, Casazza and Kutyniok obtained some results in the context of Hilbert spaces, see \cite{CG}.
  We take tools of theory of  $C^{*}$-algebra, and properties of $\BH$, to show that every Hilbert space with $W$-metric $\h_{W}$ with $0\in\sigma(W)$ has a decomposition
$$\h_{W}=\bigoplus_{n\in\N\cup\{\infty\}}\h_{\psi_{n}}^{W},$$
where $\h_{\psi_{n}}^{W}\simeq \Ele(\sigma(W),x\,d\mu_{n}(x))$ are Krein spaces, for every $n\in\N\cup\{\infty\}$. Moreover, we investigate  the dynamics of frames of subspace when the self-adjoint operator $W$ is unbounded.
\end{abstract}

\begin{keyword}

 Frames of subspaces; Dual frame, Fusion frames; frames in Krein spaces; Krein spaces; frames in $W$-spaces.
\MSC[2010] 42C15 \sep46C20 \sep 47B50 \sep 47B15.
\end{keyword}

\end{frontmatter}

\section*{Introduction}
The frames were introduced in 1952 by Duffin and Schaeffer, see \cite{DS}, and almost thirty years later, in 1986, Daubechies, Grossmann and Meyer
 used frames to find  expansions in series of functions in $ \Ele (\Real) $ similar to the expansion in series using orthonormal basis, see \cite{DG}. They show that the frames are flexible tools, because work with frames allows us to avoid the linear independence and orthogonality between elements.  In recent years the theory of frames on Hilbert space has been increasing due to the development of new applications, see \cite{CO,Olec,Olecjensen,Frames}.

 The frame of subspaces on Hilbert spaces was studied by Casazza,  Kutyniok in \cite{CG}, and G\u{a}vru\c{t}a in \cite{G}. On the other hand,  the Hilbert spaces with $W$-metric was developed by Azizov, and Iokhvidov in \cite{Iokhvidov}. Now we define
 frame of subspaces on Hilbert spaces  with $W$-metric, but in  \cite{KEFER, KEG} is shown that the properties $ 0\in\rho(W)$ (i.e., $0$ is in the resolvent of $W$), $ 0\in\sigma(W)$ (i.e., $0$ is in the spectrum of $W$)  do influence when we want to see the behavior of the frames.  Therefore, in this work we do it in case of frame of subspaces.

The main motivation comes from theory of $C^{*}$-algebra, where the theorem of spectral representation (see \cite{BS}) is proved for Hilbert spaces. We want to write this important theorem in the sense of frame of subspace and after see it on Hilbert spaces  with a $W$-metric.

This paper is structured as follows. In Section \ref{preliminares}, based on the references \cite{Iokhvidov,Azizov,JBognar}, we will consider some basic aspects related to Krein spaces and Hilbert spaces with $W$-metric.
In Section \ref{main} we present our approach to the study of frame of subspaces. In the subsection \ref{framesubhil} we introduce the frame of subspaces in Hilbert spaces and we set an analogous in the subsection \ref{framesofsubspace} to the case of Krein spaces, i.e.,
is defined frame of subspaces in Krein spaces, which is deduced from the definition of frame in Krein spaces given in \cite{KEFER}, showing some results that are a consequence of the definition. Next, in the subsection \ref{frameswmetric} is considered the Gram operator $W$ bounded and, dependent of $0\in\sigma(W)$,  or $0\notin\sigma(W)$. Later, in subsubsections \ref{frameswmetricregular} and \ref{frameswmetricsingular}, are analyzed the frames of subspaces on regular and singular Krein space (respectively). The main result of this paper in this subsection: \emph{every singular Krein space has a decomposition in direct sum of singular Krein subspaces  $\h_{\psi_{n}}^{W}$, which are isomorphic to $\Ele(\sigma(W),x\,d\mu_{n}(x))$ for every $n\in\N\cup\{\infty\}$}. Finally, in the subsubsection \ref{frameswmetricunbounded} studying the behavior of frames of subspace when the Gram
operator is unbounded. The interested reader in this subject can found some open questions and remarks in Section \ref{remark}.
 \newpage
\section{Preliminaries}\label{preliminares}

\begin{definition}[\textit{Krein Spaces}]
Let $\Re$ be a vector space on $\Complex$. Consider \linebreak $[\cdot,\cdot]:\,\Re\times\Re\longrightarrow\Complex$,  a  sesquilinear form.  The vector space $\left(\Re,\,[\cdot,\cdot]\right)$ is a \textit{Krein space} whether $\Re=\Re^{+}\oplus\Re^{-}$ and  $\left(\Re^{+},\,[\cdot,\cdot]\right)$, $\left(\Re^{-},\,-[\cdot,\cdot]\right)$ are Hilbert spaces, where $\Re^{+}$, $\Re^{-}$ are orthogonal with respect $[\cdot,\cdot]$.
\end{definition}
On $\Re$ define the following scalar product
$$(x_{1},x_{2})=[x_{1}^{+},x_{2}^{+}]-[x_{1}^{-},x_{2}^{-}],\; x_{i}^{\pm}\in\Re^{\pm},\; x_{i}=x_{i}^{+}+x_{i}^{-}.$$
This scalar product makes $\left(\Re,\,(\cdot,\cdot)\right)$  a Hilbert space, which is so-called Hilbert space associated to $\Re.$ Hence, we can take the orthogonal projections on $\Re^{+}$ and $\Re^{-}$ denoted $P^{+}$ and $P^{-}$ respectively. The linear bounded operator $J=P^{+}-P^{-}$ is called \textit{Fundamental Symmetry}, and it satisfies the equality $[x,y]=(Jx,y),\;\forall x,\,y\in\Re.$ Equivalently
\begin{equation}\label{jnorma}
[x,y]_{J}=[Jx,y]=(x,y),\quad\text{and denote} \quad\|x\|_{J}=\sqrt{[x,x]_{J}}\,\forall x,\,y\in\Re.
\end{equation}
\begin{definition}
Let $(\Re,[\cdot,\cdot])$ be a Krein space, Consider $x,y\in\Re$,  we say that $x$ is orthogonal to $y$ if $[x,y]_{J}=0$, and is denoted by $x\perp y$. We say that $x$ is $J$-orthogonal to $y$  if $[x,y]=0$, and is denoted by $x[\perp]y$.
\end{definition}
\begin{definition}
A subspace $V$ of $\Re$  such that $V\cap V^{[\perp]}=\{0\}$ and $V+ V^{[\perp]}=\Re$, where $V^{[\perp]}$ is the orthogonal complement of $V$ with respect $[\cdot,\cdot]$ is called  projectively complete.
\end{definition}
\begin{remark}
  From now on, the closed subspaces taken in this paper are  projectively complete. We denoted by $P_{V}$ and $Q_{V}$ the orthogonal and $J$-orthogonal projections on $V$ respectively. i.e., $P_{V}^{*J}=P_{V}=P_{V}^{2}$ and $Q_{V}^{[*]}=Q_{V}=Q_{V}^{2}$ resp. On the other hand,  in \cite{Iokhvidov} shown that for any closed subspace $V$ its $J$-orthogonal complement $V^{[\perp]}$ and its orthogonal complement $V^{\perp}$ are closed subspaces connected by the formulae
  \begin{align}
    V^{[\perp]}=JV^{\perp},\quad   V^{\perp}=JV^{[\perp]}, \quad   (JV)^{[\perp]}=JV^{[\perp]}\label{subspacepkpk}.
  \end{align}
  By \eqref{subspacepkpk} we note that $JV$  is projectively complete if and only if $V$ is projectively complete. In addition, the condition $V\cap V^{[\perp]}=\{0\}$ said us that every $k\in\Re$ has an unique  $J$-orthogonal projection on $V$, see \cite{Iokhvidov}.
\end{remark}
\begin{definition}\cite{Iokhvidov}
  Let $\Re$ be a Krein space. A system vector $\{e_{i}\}_{i\in I}\subset \Re$, where $I$ is an arbitrary set of indices is called a $J$-orthonormalized system if $[e_{i},e_{j}]=\pm\delta_{i,j}$ for all $i,j\in I$, where $\delta_{i,j}$ is the Kronecker delta.
\end{definition}
\begin{example}
   The simplest example of an $J$-orthonormalized system of $\Re$ is  the union of two arbitrary orthonormalized (in the usual sense) systems from the subspaces $\Re^{+}$ and $\Re^{-}$ respectively.
\end{example}
\begin{definition}\cite{Iokhvidov}
  Let $\Re$ be a Krein space. A $J$-orthonormalized basis in $\Re$ are a  $J$-orthonormalized systems which are (Schauder) bases in $\Re$.
\end{definition}

If  $\Re$ is considered with the scalar product and the generated norm \eqref{jnorma}, we have that  $(\Re,[\cdot,\cdot]_{J})$ is a Hilbert space. But, the main purpose is to study linear operators acting on Krein spaces, the topology of this space is important for issues related to the continuity, and closedness operators and spectral theory etc. This topology is generated by $J$-norm given in \eqref{jnorma}.

 Therefore, arise that some definitions of \emph{operator theory in Hilbert spaces} is satisfies. In addition, if  we talk of continuity of operators, we referred respect to topology generated by $J$-norm. For example
$$\mathcal{B}(\Re)=\left\{T:\Re\rightarrow\Re:\;\text{lineal y}\quad \|T\|=\sup_{x\in\Re\setminus\{0\}}\frac{\|Tx\|_{J}}{\|x\|_{J}}<\infty\right\}.$$
The adjoint of an operator $T$ in Krein spaces $\left(T^{[\ast]}\right)$ satisfies $[T(x),y] = [x,T^{[\ast]}(y)]$, but, we must consider that $T$ have an adjoint operator in the Hilbert space  $(\Re, [\cdot,\cdot]_{J})$  denoted $\left(T^{\ast J}\right)$, where  $J$  is the fundamental symmetry in $\Re$, and there isf a relation between $T^{\ast J}$ and $T^{[\ast]}$, which is $T^{[ \ast ] } = J T^{\ast J}J$. Moreover, let $\Re$ and $\Re'$ be Krein spaces  with fundamental symmetries $J_{\Re}$ and $J_{\Re'}$ respectively, if $T\in\mathcal{B}(\Re,\Re')$ then $T^{[*]_{\Re}}=J_{\Re}T^{*J_{\Re}}J_{\Re'}$. An operator $T\in\mathcal{B}(\Re)$ is said to be self-adjoint if $T=T^{[*]}$, and $J-$self-adjoint whether $T=T^{*J}$, moreover, a linear operator $T$ is said to be positive whether $[Tk,k]\geq0$ for every $k\in\Re$. An operator $T$ is said to be uniformly positive if there exists $\alpha>0$ such that
$[Tk,k]\geq\alpha\|k\|_{J}$ for every $k\in\Re$.

Next result is very important for us, its proof is follows the same argument applied at case  Hilbert space. See in \cite{G}.
\begin{proposition}\label{conmutatividadj}
Let $\Re$ and $\widetilde{\Re}$ be Krein spaces with fundamentals symmetries  $J,\widetilde{J}$ resp. Consider $V\subset\Re$ a closed subspace with $J$-orthogonal projection $Q_{V}:\Re\rightarrow V$, and orthogonal projection $P_{V}:\Re\rightarrow V$.
 Let  $\mathcal{U}: \left(\Re,[\cdot,\cdot]_{J}\right)\rightarrow\left(\widetilde{\Re},[\cdot,\cdot]_{\widetilde{J}}\right)$ and $\mathcal{T}: \Re\rightarrow\widetilde{\Re}$   be  unitary operators. Then is satisfied
  \begin{equation}\label{conUpro}
   \U P_{V}\U^{-1}=P_{\mathcal{U}V},\quad\mathcal{T}Q_{V}\mathcal{T}^{-1}=Q_{\mathcal{T}V},
  \end{equation}
   where $P_{\mathcal{U}V}:\widetilde{\Re}\rightarrow \mathcal{U}V$ is the orthogonal projection on $\mathcal{U}V$, and $P_{\mathcal{T}V}:\widetilde{\Re}\rightarrow \mathcal{T}V$ is the $J$-orthogonal projection on $\mathcal{T}V$.
 In particular, if $\Re=\widetilde{\Re}$ and $J=\widetilde{J}$, then
\begin{equation}\label{conJpro}
P_{JV}=JP_{V}J=P_{V}^{[*]},\quad Q_{JV}=JQ_{V}J=Q_{V}^{*J}.
\end{equation}
\end{proposition}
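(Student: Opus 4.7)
Both displayed identities are instances of the same principle: a bijection that preserves the sesquilinear form defining a given projection transports the projection to the projection onto the image subspace. The plan is to argue the two cases in parallel and then specialize to recover the ``in particular'' statements.

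For $\U P_{V}\U^{-1}=P_{\U V}$, I fix $x\in\widetilde{\Re}$ and use that $V$ is closed in $(\Re,[\cdot,\cdot]_{J})$ to write the Hilbert orthogonal decomposition $\U^{-1}x=v+w$ with $v\in V$ and $w\in V^{\perp}$. Since $\U$ is a unitary between the associated Hilbert spaces, $\U V$ is a closed subspace of $\widetilde{\Re}$ and $\U V^{\perp}=(\U V)^{\perp}$; applying $\U$ produces the orthogonal decomposition $x=\U v+\U w$ in $(\widetilde{\Re},[\cdot,\cdot]_{\widetilde{J}})$, so uniqueness forces $P_{\U V}x=\U v=\U P_{V}\U^{-1}x$. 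For $\mathcal{T}Q_{V}\mathcal{T}^{-1}=Q_{\mathcal{T}V}$, I run the same argument on the $J$-orthogonal decomposition $\mathcal{T}^{-1}x=v+w$ with $v\in V$, $w\in V^{[\perp]}$, provided I first verify that $\mathcal{T}V$ is projectively complete in $\widetilde{\Re}$. Because $\mathcal{T}$ is a Krein isometric bijection, it commutes with taking $[\perp]$-complements, so $\mathcal{T}V\cap(\mathcal{T}V)^{[\perp]}=\mathcal{T}(V\cap V^{[\perp]})=\{0\}$ and $\mathcal{T}V+(\mathcal{T}V)^{[\perp]}=\mathcal{T}(V+V^{[\perp]})=\widetilde{\Re}$; uniqueness of the $\widetilde{J}$-orthogonal decomposition of $x$ then yields the claim.

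For the in particular case, I specialize the two general identities to $\U=\mathcal{T}=J$. One first notes that $J$ qualifies as both kinds of unitary: the relations $J^{2}=I$ and $J^{*J}=J$ make $J$ unitary on $(\Re,[\cdot,\cdot]_{J})$, while the chain $[Jx,Jy]=(J^{2}x,Jy)=(x,Jy)=\overline{(Jy,x)}=\overline{[y,x]}=[x,y]$, which uses $[u,v]=(Ju,v)$ and Hermitian symmetry of the Krein form, shows that $J$ is also Krein-unitary. Specialization gives $JP_{V}J=P_{JV}$ and $JQ_{V}J=Q_{JV}$, and the identifications with the adjoints follow from the relation $T^{[*]}=JT^{*J}J$ recorded in the preliminaries: since $P_{V}^{*J}=P_{V}$ this reads $P_{V}^{[*]}=JP_{V}J$, and since $Q_{V}^{[*]}=Q_{V}$ the equivalent form $T^{*J}=JT^{[*]}J$ gives $Q_{V}^{*J}=JQ_{V}J$.

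The only place where the argument is not purely formal is the projective completeness of $\mathcal{T}V$, which is what legitimizes invoking a unique $\widetilde{J}$-orthogonal decomposition on $\widetilde{\Re}$; everything else is a direct transport of the projection data by the intertwining unitary.
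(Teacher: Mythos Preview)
Your argument is correct and is precisely the standard transport-of-structure proof: decompose, push forward by the unitary, and use uniqueness of the decomposition in the target space. The paper in fact does not supply a proof of this proposition; it merely remarks that the argument is the same as in the Hilbert space setting and refers the reader to \cite{G}. Your write-up makes explicit exactly that argument (in both the Hilbert and Krein variants), and your handling of the ``in particular'' clause---checking that $J$ is simultaneously unitary for $[\cdot,\cdot]_{J}$ and for $[\cdot,\cdot]$, then invoking $T^{[*]}=JT^{*J}J$ together with $P_{V}^{*J}=P_{V}$ and $Q_{V}^{[*]}=Q_{V}$---is the right way to recover \eqref{conJpro}. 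The one nontrivial point you flag, namely that projective completeness of $V$ transfers to $\mathcal{T}V$ via $\mathcal{T}V^{[\perp]}=(\mathcal{T}V)^{[\perp]}$, is indeed what justifies the existence and uniqueness of the $\widetilde{J}$-orthogonal decomposition used in the second identity, and this is a detail the paper's one-line pointer to \cite{G} leaves implicit.
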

\begin{proposition}\label{proproyeqp}
  Let $V$ be a closed subspace of $\Re$. The following statements is hold.
  \begin{itemize}
\item [$i).$]  If $P_{V}$ is an orthogonal projection on $V$, then $Q_{V}=P_{V}P_{JV}$ is a $J$-orthogonal projection on $V$.
\item [$ii).$]  If $Q_{V}$ is an orthogonal projection on $V$, then $P_{V}=Q_{V}Q_{JV}$ is a $J$-orthogonal projection on $V$.
  \end{itemize}
\end{proposition}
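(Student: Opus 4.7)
The plan is to check, for the operator $R := P_{V}P_{JV}$, the three defining properties of the $J$-orthogonal projection onto $V$: namely (a) $R^{[*]} = R$, (b) $R^{2} = R$, and (c) $\Rang(R) = V$. Part (ii) would then follow by the symmetric argument, interchanging the roles of the orthogonal and $J$-orthogonal structures and applying the analog of Proposition \ref{conmutatividadj} for $J$-orthogonal projections, so I focus on (i).

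For (a), I would invoke the identity $T^{[*]} = J T^{*J} J$ recalled in the preliminaries, together with the facts $P_{V}^{*J} = P_{V}$ and $P_{JV}^{*J} = P_{JV}$, to obtain
\begin{equation*}
R^{[*]} \;=\; J\bigl(P_{V} P_{JV}\bigr)^{*J} J \;=\; J\, P_{JV}\, P_{V}\, J.
\end{equation*}
Substituting $P_{JV} = J P_{V} J$ from equation (\ref{conJpro}) of Proposition \ref{conmutatividadj} and using $J^{2} = I$ then collapses the right-hand side to $P_{V}(J P_{V} J) = P_{V} P_{JV} = R$, proving $J$-self-adjointness.

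For (b) and (c), the key input is the projective completeness hypothesis, which gives the topological direct sum $\Re = V \oplus V^{[\perp]} = V \oplus JV^{\perp}$. For any $u \in JV^{\perp}$, Hilbert-orthogonality of $u$ to $JV$ gives $P_{JV}u = 0$, so $Ru = 0$; hence $R$ annihilates $V^{[\perp]}$. Combined with $\Rang(R) \subseteq \Rang(P_{V}) = V$, it then remains to establish the identity $Rv = v$ for every $v \in V$: this simultaneously yields idempotency $R^{2} = R$, the inclusion $V \subseteq \Rang(R)$, and identifies $R$ as the projector onto $V$ along $V^{[\perp]}$, which by definition is $Q_{V}$.

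The main obstacle I anticipate is precisely this last identity, $P_{V} P_{JV} v = v$ for $v \in V$. It is not immediate from the bare decomposition $\Re = V \oplus JV^{\perp}$; the condition $V \cap V^{[\perp]} = \{0\}$ has to be exploited to ensure that $P_{V}|_{JV} : JV \to V$ is a bijection, so that every $v \in V$ admits a unique preimage in $JV$ that $P_{JV}$ then correctly recovers. Pinning down exactly how projective completeness forces this transversality, and consequently the restoration $P_{V} P_{JV} v = v$, is where the argument will require the most care.
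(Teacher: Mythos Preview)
Your verification of $R^{[*]}=R$ and of $R|_{V^{[\perp]}}=0$ is correct, and you are right to isolate the identity $P_{V}P_{JV}v=v$ for $v\in V$ as the crux. But the bijection heuristic you propose does not close the gap: even when $P_{V}|_{JV}:JV\to V$ is bijective, the orthogonal projection $P_{JV}|_{V}$ is its Hilbert-space adjoint $(P_{V}|_{JV})^{*J}$, not its inverse, so $P_{V}P_{JV}|_{V}=(P_{V}|_{JV})(P_{V}|_{JV})^{*J}$ equals the identity on $V$ only when $P_{V}|_{JV}$ is unitary, i.e.\ only when $JV=V$. A concrete failure: take $\Re=\Complex^{2}$ with $J=\mathrm{diag}(1,-1)$ and $V=\Complex\,(2,1)$. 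Then $V^{[\perp]}=\Complex\,(1,2)$, so $V$ is projectively complete, yet $P_{JV}(2,1)=\tfrac{3}{5}(2,-1)$ and $P_{V}P_{JV}(2,1)=\tfrac{9}{25}(2,1)\neq(2,1)$; hence $R^{2}\neq R$ and $R$ is not a projection at all.

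The paper's own argument reaches the same step and gets past it by asserting ``since $JV\subset V$''. Under that extra hypothesis one indeed has $P_{V}P_{JV}=P_{JV}$ and everything goes through (in fact then $JV=V$, $V^{[\perp]}=V^{\perp}$, and $Q_{V}=P_{V}$ trivially). But $J$-invariance of $V$ is neither stated in the proposition nor implied by projective completeness, so as written the statement is false, and no refinement of the decomposition $\Re=V\oplus V^{[\perp]}$ will supply the missing identity.
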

\begin{proof}
  We prove only $i$,  the proof of $ii$ is analogous. Let $P_{V}$ be an orthogonal projection on $V$, consider $Q_{V}=P_{V}P_{JV}$ which is well defined by \eqref{subspacepkpk} and since that $JV\subset V$, we have  $[Q_{V}x,y]=[P_{V}P_{JV}x,y]= [P_{JV}x,P_{JV}y]= [P_{V}Jx,P_{V}Jy]=[Jx,Jy]=[x,y]$, for all $x,y\in V$. i.e., $Q_{V}x=x$,   $\forall x\in V$.
 On the other hand, by \eqref{conJpro} we get  $Q_{V}^{[*]}=P_{JV}^{[*]}P_{V}^{[*]}=P_{V}P_{JV}=Q_{V}$.  Since that $JV\subset V$, we have that $Q_{V}=P_{V}P_{JV}=P_{JV}=P_{JV}^{2}=(P_{V}P_{JV})^{2}=Q_{V}^{2}$.
\end{proof}


\subsection{Hilbert spaces with $W$-metrics}\label{Wmetrics}
\begin{definition}[$W$-metrics]\mbox{}\\
Let $\h$ be a Hilbert space with scalar product  $\langle\cdot,\cdot\rangle$, and induced norm $\|\cdot\|=\sqrt{\langle\cdot,\cdot\rangle}$. Consider the operator $W=W^{*}\in\BH$ with $\ker W=\{0\}$. The sesquilinear form
\begin{equation}\label{wmetrica}
[\cdot,\cdot]=\langle W(\cdot),\cdot\rangle
\end{equation}
defined on $\h$ is an indefinite inner product so called $W-$metric, or, $W-$inner product, this operator $(W)$ is called \textit{Gram operator}.
\end{definition}
\begin{proposition}\label{propencajehilbertkrein}
 A Hilbert space with a $W$-metric can be densely embedded in a Krein space $\h_{W}$ with fundamental symmetry  $J$.
\end{proposition}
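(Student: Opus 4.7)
The plan is to construct $\h_W$ explicitly via the polar decomposition of the Gram operator $W$ and then complete the resulting pre-Hilbert spaces. First I would write $W = J|W|$ with $J = \operatorname{sgn}(W)$ defined through the Borel functional calculus applied to the self-adjoint operator $W$. Because $\ker W=\{0\}$, the sign function is well-defined almost everywhere on $\sigma(W)$ and yields a self-adjoint involution: $J^{*}=J$ and $J^{2}=I$. Thus $J$ is already a candidate fundamental symmetry on $\h$, and it commutes with $|W|$. The spectral projections $P^{\pm} = (I \pm J)/2$ provide a direct-sum decomposition $\h = \h^{+}\oplus\h^{-}$ with $\h^{\pm} = \ker(J\mp I)$, and this decomposition is orthogonal with respect to $\langle\cdot,\cdot\rangle$.

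Next I would transfer the $W$-metric to each summand. For $x\in\h^{+}$ one has $Jx=x$, so $[x,x] = \langle J|W|x,x\rangle = \langle |W|x,x\rangle \ge 0$, and the condition $\ker W = \{0\}$ makes this positive definite on $\h^{+}$. Symmetrically $[x,x] = -\langle |W|x,x\rangle \le 0$ on $\h^{-}$, and these subspaces are mutually $[\cdot,\cdot]$-orthogonal since they are orthogonal in $\langle\cdot,\cdot\rangle$ and invariant under $W$. Hence $\left(\h^{+},[\cdot,\cdot]\right)$ and $\left(\h^{-},-[\cdot,\cdot]\right)$ are pre-Hilbert spaces.

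Let $\h_{W}^{\pm}$ denote their Hilbert-space completions, and set $\h_{W}:=\h_{W}^{+}\oplus\h_{W}^{-}$. Extend the indefinite form by sesquilinear continuity, declaring $\h_{W}^{+}$ and $\h_{W}^{-}$ to be $[\cdot,\cdot]$-orthogonal; by construction $\left(\h_{W},[\cdot,\cdot]\right)$ is a Krein space in the sense of the definition above, whose fundamental symmetry is the bounded extension of $J$ acting as $+I$ on $\h_{W}^{+}$ and $-I$ on $\h_{W}^{-}$. The inclusion $\h \hookrightarrow \h_{W}$ sending $x=P^{+}x+P^{-}x$ to itself is well-defined and injective, and its range is dense because each $\h^{\pm}$ is dense in its completion $\h_{W}^{\pm}$ by definition.

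The main technical point is the interplay between the two topologies when $0\in\sigma(W)$. Since $[x,x]^{1/2} \le \|W\|^{1/2}\,\|x\|$ on $\h^{\pm}$ while the reverse estimate fails exactly when $W$ is not boundedly invertible, the $W$-norm is strictly weaker than the original norm and the inclusion $\h^{\pm}\hookrightarrow\h_{W}^{\pm}$ is continuous but not surjective. This is what makes the embedding merely dense rather than a topological isomorphism, and it is the step where care is required: one must verify that $J$ extends as a bounded involution to the completion (which is automatic since $J = \pm I$ on each summand and thus is a $[\cdot,\cdot]_{J}$-isometry on $\h^{\pm}$) and that the extended $[\cdot,\cdot]$ agrees with the original on the dense copy of $\h$, both of which follow from continuity of $\langle W\cdot,\cdot\rangle$ with respect to the $W$-norm.
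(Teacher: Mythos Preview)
Your construction is correct and is essentially the classical one: polar decomposition $W=J|W|$ with $J=\operatorname{sgn}(W)$ via Borel functional calculus, splitting $\h=\h^{+}\oplus\h^{-}$ along the spectral projections $P^{\pm}=(I\pm J)/2$, verifying that $\pm[\cdot,\cdot]$ is positive definite on $\h^{\pm}$, completing each summand, and extending $J$ as $\pm I$ on the completions. The argument is sound; in particular your observation that $J$ extends isometrically (for the $J$-norm) because it is $\pm I$ on each piece handles the only subtle point in the passage to the completion.

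As for comparison with the paper: the paper gives no proof at all---it simply refers the reader to Azizov--Iokhvidov \cite{Iokhvidov}. What you have written is exactly the standard argument found in that reference (and in Bogn\'ar), so your approach coincides with the one the paper implicitly invokes. The paper does, however, spell out pieces of this machinery immediately after the proposition (the polar decomposition $W=J|W|$, the fact that $J=J^{*}=J^{-1}$ commutes with $|W|$ and $W$, and the identification $[x,y]_{J}=\langle|W|x,y\rangle$), so your proof dovetails with the surrounding text rather than introducing anything foreign.
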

\begin{proof}See \cite{Iokhvidov}.
\end{proof}

\begin{remark}[\textit{Consequences given by the  Gram operator}]
Let $W$ be the Gram operator on $\h$.
\begin{itemize}
\item [$i).$] If $0\in\rho(W)$,  then
\begin{equation}\label{equinorm}
\|W^{-1}\|^{-1}\|x\|^{2}\leq\|x\|_{J}^{2}\leq\|W\|\,\|x\|^{2},\quad\forall x\in\h.
\end{equation}
Therefore
\begin{equation}\label{definkreinregular}
\h_{W}=\overline{\left(\h,[\cdot,\cdot]_{J}\right)}^{\|\cdot\|_{J}}=\left(\h,[\cdot,\cdot]_{J}\right).
\end{equation}

\item [$ii).$] If $0\in\sigma(W)$, then
\begin{equation}
\|x\|_{J}\leq\sqrt{\|W\|}\,\|x\|,\quad\forall x\in\h.
\end{equation}
Hence
\begin{equation}\label{definkreinsingular}
\h_{W}:=\overline{\h}^{\|\cdot\|_{J}}.
\end{equation}
\end{itemize}
\end{remark}

  \begin{definition}
    Let $\left(\h,\langle\cdot,\cdot\rangle\right)$  be a Hilbert space. The Krein space $\h_{W}$ is said to be \emph{regular} whether the Gram operator $W$ is such that $0\in\rho(W)$. Otherwise is said to be \textit{singular.}
  \end{definition}

More details of the regular and singular Krein spaces is recommendable to see  \cite{Iokhvidov}.
\begin{remark}
Consider the polar decomposition  of $W$ given by the formula
\begin{equation}\label{descopolar}
W=J|W|,
 \end{equation}
where the linear operator $J:\left(\ker\;|W|\right)^{\perp}=\overline{\Rang\;|W|}=\h\rightarrow\overline{\Rang\;W}=\h$ is a partial isometry. However,  $\ker\;J=\{0\}$, this imply that $J$ is a unitary operator.
\end{remark}
 \begin{proposition} The operators  $|W|$, $W$ commute with $J$, where $J$ is such that  \eqref{descopolar} is true. Also  $J=J^{*}$.
\end{proposition}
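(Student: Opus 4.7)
The plan is to exploit the self-adjointness of $W$ together with the uniqueness of the polar decomposition (which is available because $\ker |W| = \ker W = \{0\}$ and hence $J$ is a well-defined unitary).

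First, I would take the adjoint of \eqref{descopolar}. Since $W^{*}=W$ and $|W|^{*}=|W|$, the identity $W=J|W|$ together with $W=W^{*}=(J|W|)^{*}=|W|J^{*}$ gives the two representations
\begin{equation*}
W=J|W|=|W|J^{*}.
\end{equation*}
Squaring (using that $W$ is self-adjoint so $W^{2}=|W|^{2}$) yields
\begin{equation*}
|W|^{2}=W^{2}=\bigl(J|W|\bigr)\bigl(|W|J^{*}\bigr)=J|W|^{2}J^{*},
\end{equation*}
hence $J^{*}|W|^{2}=|W|^{2}J^{*}$.

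Second, since $|W|$ is the positive square root of $|W|^{2}$, the Borel functional calculus guarantees that any bounded operator commuting with $|W|^{2}$ also commutes with $|W|$. Therefore $J^{*}|W|=|W|J^{*}$, which combined with $W=|W|J^{*}$ gives $W=J^{*}|W|$. By the uniqueness of the polar decomposition of $W$ on $\overline{\Rang\,|W|}=\h$ (the partial-isometry factor is uniquely determined once its initial space is fixed as $(\ker|W|)^{\perp}=\h$), we conclude $J=J^{*}$.

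With $J=J^{*}$ in hand, the commutation $J|W|=|W|J$ is immediate from the previous step, and then
\begin{equation*}
WJ=J|W|\,J=J\,J|W|=JW,
\end{equation*}
so $W$ commutes with $J$ as well. The only mildly delicate point is the appeal to uniqueness of the polar decomposition, which is why I first verified that $J$ is genuinely unitary (as noted in the remark preceding the proposition): this is what prevents ambiguity in the factor $J$ and lets us pass from $W=J|W|=J^{*}|W|$ to $J=J^{*}$.
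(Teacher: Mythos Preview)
Your argument is correct, and the underlying ingredients---self-adjointness of $W$, functional calculus, and the injectivity of $|W|$---are the same ones the paper uses, but the logical route is arranged differently. The paper begins by invoking the spectral calculus directly to assert $W\,|W|=|W|\,W$, then cancels $|W|$ on the right (using that $\overline{\Rang|W|}=\h$) to obtain $J|W|=|W|J$; only afterwards does it combine this with $W=W^{*}=|W|J^{*}$ to get $|W|(J-J^{*})=0$ and hence $J=J^{*}$ from $\ker|W|=\{0\}$. You instead square the polar decomposition to see that $J^{*}$ commutes with $|W|^{2}$, pass to $|W|$ via functional calculus, and then appeal to the uniqueness of the polar decomposition to conclude $J=J^{*}$. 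Your uniqueness step is really the paper's injectivity step in disguise (both amount to: if $A|W|=B|W|$ with bounded $A,B$ and dense $\Rang|W|$, then $A=B$), so the two proofs are close cousins; the main practical difference is that the paper gets $J|W|=|W|J$ first and $J=J^{*}$ second, while you reverse that order.
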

\begin{proof}By the properties of spectral measure we have $W\,|W|=|W|\,W$. Hence, $\left(J|W|-|W|J\right)|W|=0$. i.e $J|W|=|W|J.$ On the other hand, note that $J|W|=W=W^{*}=|W|J^{*}$. Therefore $|W|\left(J-J^{*}\right)=0$. i.e., $J=J^{*}=J^{-1}$,  because $\ker |W|=\{0\}$. Since that $J|W|=|W|J$, we have  $JW=J\left(J|W|\right)=J\left(|W|J\right)=J|W|J=WJ$.
\end{proof}

\begin{definition}
The space $\h_{W}$ is a Krein space with $J$-norm generate by the $J$-inner product
\begin{equation}\label{Jpro}
  [x,y]_{J}=[Jx,y]=\langle WJx,y\rangle=\langle |W|x,y\rangle\quad\forall x,y\in\h,
\end{equation}
where $J$ is the symmetry of Hilbert space $\h$ such that $W=J|W|$.
\end{definition}

\section{Main results}\label{main}
In this section we present an approach to study frame of subspaces in the context of Krein and Hilbert spaces.
\subsection{Frames of subspace on  Hilbert spaces}\label{framesubhil}
Next we consider the frames of subspace in Hilbert spaces. See details in  \cite{CG}. From now on we consider  $I$  like a set of indices, and define
  \begin{equation}
    \ell_{+}^{\infty}(I)=\left\{(x_{i})_{i\in I}\in\ell^{\infty}(I): x_{i}\in \Real_{+} ,\quad\forall i\in I\right\}.
  \end{equation}
  \begin{definition}[\textit{Frame of subspace}]\label{definframesubihilbert}
 Let $\h$ be a Hilbert space with norm $\|\cdot\|$. A family $\left\{V_{i}\right\}_{i\in I}$ of closed subspaces of $\h$  is said to be a \emph{frame of subspace} of Hilbert space $\h$ with respect $(x_{i})_{i\in I}\in\ell_{+}^{\infty}(I)$ (denoted by $\left\{x_{i},V_{i}\right\}_{i\in I}$) whether there are constants $A,B>0$ such that
  \begin{equation}\label{definframesubhil}
    A\|y\|^{2}\leq \sum_{i\in I}x_{i}^{2}\left\|P_{V_{i}}y\right\|^{2}\leq B\|y\|^{2},\; \forall \, y\in\h,
  \end{equation}
  being $P_{V_{i}}:\h\rightarrow V_{i}$ orthogonal projections.  The numbers $A$ and $B$ are called \textit{frame bounds}.
\end{definition}
\subsection{Frames of subspace on  Krein spaces}\label{framesofsubspace}
Here we consider the frame of subspaces in Krein spaces. This work is based in the properties given to Hilbert spaces, which can be found in \cite{CG} together with the study made in \cite{KEFER}.
\begin{definition}\label{definframesubi}
Let $\Re$ be a Krein space with fundamental symmetry $J$.  Consider a family of closed subspaces non degenerated $\left\{V_{i}\right\}_{i\in I}$ of $\Re$  with $Q_{V_{i}}:\Re\rightarrow V_{i}$ their respective $J$-orthogonal projections. Fix $(x_{i})_{i\in I}\in\ell_{+}^{\infty}(I)$,  we say that $\left\{x_{i},V_{i}\right\}_{i\in I}$ is a \textit{frame of subspace} of Krein space whether there are constants $A,B>0$ such that
  \begin{equation}\label{definframesub}
    A\|k\|_{J}^{2}\leq \sum_{i\in I}x_{i}^{2}\left\|Q_{V_{i}}k\right\|_{J}^{2}\leq B\|k\|_{J}^{2},\; \forall \, k\in\Re.
  \end{equation}
  \end{definition}
  \begin{remark}
According to the previous definition and in the same way as the case of Hilbert spaces, the constants  $A$ and $B$ are called  \textit{frame bounds}. Whenever $A=B$, the family $\{x_{i},V_{i}\}_{i\in\,I}$
is called a \textit{B-tight frame} of subspaces. In particular, if $A = B = 1$, then the family $\{x_{i},V_{i}\}_{i\in\,I}$ is called \emph{Parseval frame} of subspaces.  The family $\{x_{i},V_{i}\}_{i\in\,I}$ is said to be \textit{orthonormal basis} of subspaces when
\begin{equation}
  \Re=\bigoplus_{i\in\,I}V_{i}.
\end{equation}
   Moreover,  a frame of subspaces $\{x_{i},V_{i}\}_{i\in\,I}$ is said to be \textit{$x$-uniform}, whenever $x := x_{i} = x_{j}$ for all $i, j \in\, I$. In case that we only have the upper bound, the family $\{x_{i},V_{i}\}_{i\in\,I}$ is called \textit{Bessel sequence} of subspaces with Bessel bound $B$.
\end{remark}
\begin{theorem}[\textit{Equivalence of frames of subspaces}]\label{teorequimarsub}
 Let $\Re$ be a Krein space with fundamental symmetry $J$. Consider a family of  closed subspaces  $\left\{V_{i}\right\}_{i\in I}$ of $\Re$ with $Q_{V_{i}}:\Re\rightarrow V_{i}$ and $P_{V_{i}}:\Re\rightarrow V_{i}$ their $J$-orthogonal and orthogonal projections respectively. Fixed $(x_{i})_{i\in I}\in\ell_{+}^{\infty}(I)$, the following statements are equivalent:
  \begin{itemize}
    \item [$i)$.]  $\left\{x_{i},V_{i}\right\}_{i\in I}$ is a frame of subspace of $\Re$ with frame bounds $A,B$;
    \item [$ii)$.] $\left\{x_{i},JV_{i}\right\}_{i\in I}$ is a frame of subspace of $\Re$ with frame bounds $A,B$;
    \item [$iii)$.] $\left\{x_{i},V_{i}\right\}_{i\in I}$ is a frame of subspace of $\left(\Re, [\cdot,\cdot]_{J}\right)$ with frame bounds $A,B$;
    \item [$iv)$.] $\left\{x_{i},JV_{i}\right\}_{i\in I}$is a frame of subspace of $\left(\Re, [\cdot,\cdot]_{J}\right)$ with frame bounds $A,B$.
  \end{itemize}
\end{theorem}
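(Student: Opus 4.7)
My approach is to show that, after using the $\|\cdot\|_J$-isometry $J$ to swap $k$ with $Jk$, the four middle sums in the frame inequalities coincide in pairs, and thus the four statements collapse to two related by a bijective change of variable. The key step is the identity
\begin{equation*}
\|Q_V k\|_J = \|P_{JV} k\|_J, \qquad \|P_V k\|_J = \|Q_{JV} k\|_J, \qquad \forall k \in \Re,
\end{equation*}
valid for every projectively complete closed subspace $V$. To prove this I would invoke Proposition \ref{proproyeqp}(i), which gives $Q_V = P_V P_{JV}$; since $P_V$ is an orthogonal projection in the Hilbert space $(\Re,(\cdot,\cdot))$, its $\|\cdot\|_J$-operator norm is at most $1$, so $\|Q_V k\|_J \leq \|P_{JV} k\|_J$. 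Applying (i) with $V$ replaced by $JV$ (using $J^2=I$) gives $Q_{JV}=P_{JV}P_V$, so also $\|Q_{JV}\|\leq 1$. Dually, Proposition \ref{proproyeqp}(ii) applied to $JV$ yields $P_{JV}=Q_{JV}Q_V$, which produces the reverse inequality $\|P_{JV} k\|_J \leq \|Q_V k\|_J$. Combining the two, I obtain equality; the second identity is symmetric.

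With this identity in hand, the middle sums in $i)$ and $iv)$ are equal term-by-term, and so are those in $ii)$ and $iii)$; hence $i)\Leftrightarrow iv)$ and $ii)\Leftrightarrow iii)$ are immediate, with the same bounds $A,B$. To close the loop I would show $iii)\Leftrightarrow iv)$: Proposition \ref{conmutatividadj} applied with $\U=\mathcal{T}=J$ (which is unitary in both inner-product structures, since $J=J^*=J^{-1}$ and $[Jx,Jy]=[x,y]$) yields $P_{JV_i}=JP_{V_i}J$; using $\|Jk\|_J=\|k\|_J$, the frame inequality in $iv)$ evaluated at $k$ is precisely the inequality in $iii)$ evaluated at $Jk$. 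Since $k\mapsto Jk$ is a bijection of $\Re$, the two statements are equivalent with identical frame bounds, and the chain $i)\Leftrightarrow iv)\Leftrightarrow iii)\Leftrightarrow ii)$ is complete.

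The only non-routine step is the squeeze in the first paragraph: one must think to apply Proposition \ref{proproyeqp} both to $V$ and to $JV$ in order to upgrade the one-sided operator-norm estimates into an actual equality of norms. Once that observation is made, the remaining implications reduce to bookkeeping with the $J$-isometry property of $J$ and with the commutation formulas \eqref{conJpro}.
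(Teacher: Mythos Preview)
Your proof is correct and follows essentially the same route as the paper: both hinge on the identity $\|Q_{V}k\|_{J}=\|P_{JV}k\|_{J}$, obtained from the factorizations in Proposition~\ref{proproyeqp}, together with the conjugation formula~\eqref{conJpro} and the fact that $J$ is a $\|\cdot\|_{J}$-isometry. Your two-sided squeeze makes that key identity more explicit than the paper's write-up, and your chain $i\Leftrightarrow iv\Leftrightarrow iii\Leftrightarrow ii$ is organized slightly differently from the paper's $i\Leftrightarrow ii$, $iii\Leftrightarrow iv$, $i\Leftrightarrow iv$, but the substance is the same.
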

\begin{proof}
The equivalence of $i$ and $ii$ follows from
\begin{align*}
 \left\|Q_{JV_{i}}k\right\|_{J}^{2}&= \left\|JQ_{JV_{i}}k\right\|_{J}^{2}= \left\|Q_{V_{i}}Jk\right\|_{J}^{2}.
 \end{align*}
The same argument is applied to $\{x_{i},JV_{i}\}_{i\in I}$, together with Proposition  \ref{conmutatividadj}, to prove the equivalence of $iii$ and $iv$, i.e.,
  \begin{align*}
  \left\|P_{JV_{i}}k\right\|_{J}^{2}&= \left\|JP_{JV_{i}}k\right\|_{J}^{2}= \left\|P_{V_{i}}Jk\right\|_{J}^{2}.
 \end{align*}
The equivalence $i$ and $iv$ is proved with Proposition \ref{proproyeqp} as follows: Given $P_{JV_{i}}$ and $P_{V_{i}}$ orthogonal projections on $JV_{i}$ and $V_{i}$ respectively, we define $Q_{V_{i}}=P_{V_{i}}P_{JV_{i}}$, which is a $J$-orthogonal projection on $V_{i}$. Thus
$$\|Q_{V_{i}}k\|_{J}^{2}=\|P_{V_{i}}P_{JV_{i}}Jk\|_{J}^{2}=\|P_{JV_{i}}k\|_{J}^{2}.$$
i.e., $iv\rightarrow i$. On the other side, $i\rightarrow iv$ is follow from $$\|P_{JV_{i}}k\|_{J}^{2}=\|P_{V_{i}}P_{JV_{i}}k\|_{J}^{2}=\|Q_{V_{i}}Jk\|_{J}^{2}.$$
\end{proof}

\begin{proposition}\label{propconteoequi}
Fix $\{x_{i}\}_{i\in I}\in \ell_{+}^{\infty}(I)$. Consider a partition   $\{J_{i}\}_{i\in I}$  of $I$ such that
$I=\bigsqcup_{i\in I}J_{i}$ and $\{k_{i,j}\}_{j\in J_{i}}$ is a sequence of frame for the Krein space $\Re$ with frame bounds $A_{i}, B_{i}>0$. Define $V_{i}=span_{j\in J_{i}}\{k_{i,j}\}$  for all $i\in I$ and choose an $J$-orthonormal basis $\{e_{i,j}\}_{j\in J_{i}}$ for each subspace $V_{i}$. Suppose  that $0<A=\inf_{i\in I}A_{i}\leq B=\sup_{i\in I}B_{i}$, then the following statements are equivalent.
\begin{itemize}
  \item [$i).$] $\{x_{i}k_{i,j}\}_{i\in I, j\in J_{i}} $ is a frame for the Krein space $\Re$.
  \item [$ii).$] $\{x_{i}e_{i,j}\}_{i\in I, j\in J_{i}} $ is a frame for the Krein space $\Re$.
  \item [$iii).$] $\{x_{i},V_{i}\}_{i\in I}$ is a frame of subspace for the Krein space $\Re.$
\end{itemize}
\end{proposition}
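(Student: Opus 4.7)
The plan is to show the equivalences by comparing three ``sampling'' expressions built from $k\in\Re$,
$$
S_{1}(k)=\sum_{i\in I}\sum_{j\in J_{i}}x_{i}^{2}\,|[k,k_{i,j}]|^{2},\quad S_{2}(k)=\sum_{i\in I}\sum_{j\in J_{i}}x_{i}^{2}\,|[k,e_{i,j}]|^{2},\quad S_{3}(k)=\sum_{i\in I}x_{i}^{2}\|Q_{V_{i}}k\|_{J}^{2},
$$
and observing that statements $(i)$, $(ii)$, $(iii)$ assert exactly that $S_{1}$, $S_{2}$, $S_{3}$ (respectively) are sandwiched between positive multiples of $\|k\|_{J}^{2}$. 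So it suffices to show $S_{2}(k)=S_{3}(k)$ and $A\cdot S_{3}(k)\le S_{1}(k)\le B\cdot S_{3}(k)$.

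First I would prove $(ii)\Leftrightarrow(iii)$. Fix $i$. Because $\{e_{i,j}\}_{j\in J_{i}}$ is $J$-orthonormal in $V_{i}$ and, as observed in the preliminaries, a $J$-orthonormal family in the Krein sense is orthonormal for $[\cdot,\cdot]_{J}$, Parseval in the Hilbert space $(V_{i},[\cdot,\cdot]_{J})$ gives $\sum_{j\in J_{i}}|[Q_{V_{i}}k,e_{i,j}]_{J}|^{2}=\|Q_{V_{i}}k\|_{J}^{2}$. Using $J$-selfadjointness of $Q_{V_{i}}$ and $e_{i,j}\in V_{i}$, each inner product collapses to $[k,e_{i,j}]_{J}$; summing over $i$ with the weights $x_{i}^{2}$ yields $S_{2}=S_{3}$.

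Next I would prove $(i)\Leftrightarrow(iii)$. For each $i$ the hypothesis that $\{k_{i,j}\}_{j\in J_{i}}$ is a frame for $V_{i}$ with bounds $A_{i},B_{i}$ reads
$$
A_{i}\|v\|_{J}^{2}\le \sum_{j\in J_{i}}|[v,k_{i,j}]|^{2}\le B_{i}\|v\|_{J}^{2}\qquad(v\in V_{i}).
$$
Plugging $v=Q_{V_{i}}k$ and again using that $k_{i,j}\in V_{i}$ together with the $J$-projection identity $[Q_{V_{i}}k,k_{i,j}]=[k,k_{i,j}]$, multiplying by $x_{i}^{2}$, and summing in $i$, the uniform bounds $0<A=\inf_{i}A_{i}$ and $B=\sup_{i}B_{i}<\infty$ give the sandwich $A\cdot S_{3}(k)\le S_{1}(k)\le B\cdot S_{3}(k)$. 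Combining with $S_{2}=S_{3}$ transfers frame bounds among the three formulations in both directions; when $\{x_{i},V_{i}\}$ is a frame of subspaces with bounds $C,D$, the effective bounds for $(i)$ become $AC$ and $BD$, and analogously the other way.

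The main technical point I expect to need care with is the passage $[k,k_{i,j}]=[Q_{V_{i}}k,k_{i,j}]$ (and the analogue for $e_{i,j}$): it rests on the projective completeness of the $V_{i}$ used implicitly throughout the paper, which guarantees that $Q_{V_{i}}$ exists, is $J$-selfadjoint, and satisfies $k-Q_{V_{i}}k\in V_{i}^{[\perp]}$. The only other subtle point is the role of the uniformity $A=\inf_{i}A_{i}>0$: without it the lower sandwich for $S_{1}$ degenerates, so none of the implications involving $(i)$ survives. No routine estimate beyond Parseval and the two-sided frame inequality for $\{k_{i,j}\}_{j\in J_{i}}$ is required.
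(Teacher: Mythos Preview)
Your route differs from the paper's: the paper does not compute anything directly but observes that, by the result quoted from \cite{KEFER}, a sequence is a frame for the Krein space $\Re$ with bounds $A_{i},B_{i}$ if and only if it is a frame for the associated Hilbert space $(\Re,[\cdot,\cdot]_{J})$ with the same bounds, and combines this with Theorem~\ref{teorequimarsub} to reduce all three statements to their Hilbert--space analogues, which are exactly Casazza--Kutyniok's theorem in \cite{CG}. Your argument for $(i)\Leftrightarrow(iii)$ is essentially the Casazza--Kutyniok computation transplanted to $\Re$ via the identity $[Q_{V_{i}}k,k_{i,j}]=[k,k_{i,j}]$, and that part is sound.

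The gap is in $(ii)\Leftrightarrow(iii)$. The claim ``a $J$-orthonormal family in the Krein sense is orthonormal for $[\cdot,\cdot]_{J}$'' is \emph{not} in the preliminaries (they only assert the converse direction, in the example following Definition~1.5) and is false in general: in $\Complex^{2}$ with $J=\mathrm{diag}(1,-1)$ the pair $f_{1}=(\cosh t,\sinh t)$, $f_{2}=(\sinh t,\cosh t)$ satisfies $[f_{1},f_{1}]=1$, $[f_{2},f_{2}]=-1$, $[f_{1},f_{2}]=0$, yet $[f_{1},f_{1}]_{J}=\cosh^{2}t+\sinh^{2}t\neq 1$ for $t\neq 0$. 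Hence your Parseval identity $\sum_{j}|[Q_{V_{i}}k,e_{i,j}]_{J}|^{2}=\|Q_{V_{i}}k\|_{J}^{2}$ need not hold. There is a second slip in the same step: ``$J$-self-adjointness of $Q_{V_{i}}$'' means $Q_{V_{i}}^{[*]}=Q_{V_{i}}$, i.e.\ self-adjointness for $[\cdot,\cdot]$, not for $[\cdot,\cdot]_{J}$; by \eqref{conJpro} one has $Q_{V_{i}}^{*J}=Q_{JV_{i}}$, so the collapse $[Q_{V_{i}}k,e_{i,j}]_{J}=[k,e_{i,j}]_{J}$ is unjustified unless $JV_{i}=V_{i}$. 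Finally, your $S_{2}$ is written with the indefinite product $[\cdot,\cdot]$ while the Parseval step uses $[\cdot,\cdot]_{J}$, so even granting the two previous points the conclusion $S_{2}=S_{3}$ does not follow as written. The paper's reduction sidesteps all of this by moving the whole statement to $(\Re,[\cdot,\cdot]_{J})$ in one stroke and then quoting \cite{CG}.
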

\begin{proof}
 Since that $\{k_{i,j}\}_{j\in J_{i}}$ is a sequence of frame for the Krein space $\Re$ with frame bounds $A_{i}, B_{i}>0$, then in \cite{KEFER} shown that is equivalent to say that $\{k_{i,j}\}_{j\in J_{i}}$ is a sequence of frame for the Hilbert space $\left(\Re,\|\cdot\|_{J}\right)$ with same frame bounds. Thus, by Theorem \ref{teorequimarsub} the proof is analogous at case Hilbert space, see \cite{CG}.
\end{proof}

\begin{proposition}\label{marenmar}
Let $\Re$ and $\widetilde{\Re}$ be Krein spaces with fundamental symmetries $J$ and $\widetilde{J}$ respectively. Consider $\U:\left(\Re,[\cdot,\cdot]_{J}\right)\rightarrow\left(\widetilde{\Re},[\cdot,\cdot]_{\widetilde{J}}\right)$  an invertible operator. The family $\{x_{i},V_{i}\}_{i\in I}$ is a frame of subspace for the Hilbert space $\left(\Re,[\cdot,\cdot]_{J}\right)$ if and only if  $\{x_{i},\U V_{i}\}_{i\in I}$ is a frame of subspace for the Hilbert space $\left(\widetilde{\Re},[\cdot,\cdot]_{\widetilde{J}}\right).$
\end{proposition}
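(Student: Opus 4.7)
The proposition is essentially a statement about bounded invertible (not necessarily unitary) operators between the Hilbert spaces $(\Re,[\cdot,\cdot]_{J})$ and $(\widetilde{\Re},[\cdot,\cdot]_{\widetilde{J}})$; the Krein/fundamental-symmetry apparatus enters only through these two inner products. Unlike the unitary setting of Proposition \ref{conmutatividadj}, one cannot write $\U P_{V_{i}}\U^{-1}=P_{\U V_{i}}$, so a more delicate pointwise comparison is needed. My plan is to adapt the classical Casazza--Kutyniok argument \cite{CG} to this setting.

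First I would observe that $\U V_{i}$ is closed in $\widetilde{\Re}$ for each $i\in I$, since $\U$ is a topological isomorphism by the open mapping theorem. Next, introduce the Hilbert-space adjoint $\U^{\sharp}:\widetilde{\Re}\to\Re$ of $\U$ with respect to $[\cdot,\cdot]_{J}$ and $[\cdot,\cdot]_{\widetilde{J}}$; it is bounded and invertible, with $\|\U^{\sharp}\|=\|\U\|$ and $\|(\U^{\sharp})^{-1}\|=\|\U^{-1}\|$.

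The technical core, and the step I expect to carry essentially all of the work, is the pointwise comparison
\[
\|\U\|^{-1}\,\bigl\|P_{V_{i}}\U^{\sharp}\ell\bigr\|_{J}\;\leq\;\bigl\|P_{\U V_{i}}\ell\bigr\|_{\widetilde{J}}\;\leq\;\|\U^{-1}\|\,\bigl\|P_{V_{i}}\U^{\sharp}\ell\bigr\|_{J},
\]
valid for every $\ell\in\widetilde{\Re}$ and every $i\in I$. I would derive it from the variational identity
\[
\bigl\|P_{\U V_{i}}\ell\bigr\|_{\widetilde{J}}\;=\;\sup_{v\in V_{i}\setminus\{0\}}\frac{|[\ell,\U v]_{\widetilde{J}}|}{\|\U v\|_{\widetilde{J}}},
\]
the adjoint relation $[\ell,\U v]_{\widetilde{J}}=[\U^{\sharp}\ell,v]_{J}$, and the two-sided bound $\|\U^{-1}\|^{-1}\|v\|_{J}\leq\|\U v\|_{\widetilde{J}}\leq\|\U\|\,\|v\|_{J}$, substituting the latter into the supremum and comparing with $\|P_{V_{i}}\U^{\sharp}\ell\|_{J}=\sup_{v\in V_{i}\setminus\{0\}}|[\U^{\sharp}\ell,v]_{J}|/\|v\|_{J}$.

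Once the pointwise comparison is in hand, the rest is routine: squaring and summing over $i$ converts $\sum_{i}x_{i}^{2}\bigl\|P_{\U V_{i}}\ell\bigr\|_{\widetilde{J}}^{2}$ into constant multiples of $\sum_{i}x_{i}^{2}\bigl\|P_{V_{i}}\U^{\sharp}\ell\bigr\|_{J}^{2}$; applying the frame bounds $A,B$ of $\{x_{i},V_{i}\}_{i\in I}$ to the vector $y=\U^{\sharp}\ell\in\Re$, and closing the estimate with $\|\U^{-1}\|^{-1}\|\ell\|_{\widetilde{J}}\leq\|\U^{\sharp}\ell\|_{J}\leq\|\U\|\,\|\ell\|_{\widetilde{J}}$, produces explicit frame bounds $A\|\U\|^{-2}\|\U^{-1}\|^{-2}$ and $B\|\U\|^{2}\|\U^{-1}\|^{2}$ for $\{x_{i},\U V_{i}\}_{i\in I}$. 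The converse implication follows symmetrically by running the same argument with $\U^{-1}$ in place of $\U$.
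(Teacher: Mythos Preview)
Your argument is correct and genuinely differs from the paper's. The paper's proof is a two-line computation that invokes Proposition~\ref{conmutatividadj} directly: it writes
\[
\sum_{i\in I}x_{i}^{2}\|P_{V_{i}}k\|_{J}^{2}
=\sum_{i\in I}x_{i}^{2}\|\U^{-1}P_{\U V_{i}}\U k\|_{J}^{2}
=\sum_{i\in I}x_{i}^{2}\|P_{\U V_{i}}\U k\|_{\widetilde{J}}^{2},
\]
and reads off the equivalence. Both equalities tacitly assume $\U$ is \emph{unitary}: the first uses $\U P_{V_{i}}\U^{-1}=P_{\U V_{i}}$ from Proposition~\ref{conmutatividadj}, and the second uses that $\U^{-1}$ is an isometry. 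So despite the word ``invertible'' in the statement, the paper's proof only establishes the unitary case (and in fact every later application of this proposition in the paper, e.g.\ Theorems~\ref{singularframe} and its unbounded analogues, uses a unitary $\U$, so the gap is harmless for the paper's purposes).

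Your variational route via $\|P_{W}\ell\|=\sup_{w\in W\setminus\{0\}}|\langle\ell,w\rangle|/\|w\|$ and the Hilbert-space adjoint $\U^{\sharp}$ is exactly the right substitute when $\U$ is merely bounded and invertible; it is essentially the Casazza--Kutyniok argument transported to this setting, and it honestly delivers the stated ``invertible'' version. The price is that you obtain the degraded bounds $A\|\U\|^{-2}\|\U^{-1}\|^{-2}$ and $B\|\U\|^{2}\|\U^{-1}\|^{2}$, whereas the paper's unitary computation preserves $A$ and $B$ exactly. Both are adequate for the downstream results; yours is simply the more general argument.
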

\begin{proof}
By Proposition \ref{conmutatividadj} we obtain
\begin{equation*}
\sum_{i\in I}x_{i}^{2}\| P_{V_{i}}k\|_{J}^{2}=\sum_{i\in I}x_{i}^{2}\|\U^{-1} P_{\U V_{i}}\U k\|_{J}^{2}=\sum_{i\in I}x_{i}^{2}\|P_{\U V_{i}} \U k\|_{\widetilde{J}}^{2}.
\end{equation*}
Hence, the equivalence is follows immediately.
\end{proof}
\subsection{Frames of subspaces in Hilbert spaces with $W$-metric}\label{frameswmetric}

   In \cite{KEFER} is proved that the behavior of frame in Hilbert spaces with a $W$-metric is depending of properties $0\in \rho(W)$ or $0\in\sigma(W)$. Next we study the frame of subspaces in this spaces.

\subsubsection{Frames of subspaces on  regular Krein  spaces}\label{frameswmetricregular}
\begin{theorem}\label{makrere}
Let $\h_{W}$ and $V_{i}\subset\h$ for all $i\in I$ be a regular Krein space and closed subspaces of $\h$ respectively.  The family $\{x_{i},V_{i}\}_{i\in I}$ is a frame of subspace for the Hilbert space $\left(\h,\langle\cdot,\cdot\rangle\right)$ if and only if  $\{x_{i},V_{i}\}_{i\in I}$ is a frame of subspace for the regular Krein space $\h_{W}.$
\end{theorem}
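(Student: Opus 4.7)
The strategy is to reduce the Krein frame of subspace condition on $\h_W$ to the standard Hilbert frame of subspace condition on $(\h,\langle\cdot,\cdot\rangle)$ by chaining two equivalences. Since we are in the regular case, \eqref{definkreinregular} gives $\h_W=\h$ as vector spaces, and \eqref{equinorm} shows that $\|\cdot\|$ and $\|\cdot\|_J$ are equivalent norms, so the closed subspaces $V_i$ coincide in the two topologies. By Theorem \ref{teorequimarsub}\,(iii), $\{x_i,V_i\}_{i\in I}$ is a frame of subspace for the Krein space $\h_W$ if and only if it is a frame of subspace for the Hilbert space $(\h,[\cdot,\cdot]_J)$, where orthogonal projections are now taken with respect to the inner product $[\cdot,\cdot]_J$.

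To compare the two Hilbert structures on $\h$, I would use the operator $\Phi:=|W|^{1/2}$, which in the regular case is bounded and boundedly invertible on $\h$. A direct calculation gives $\langle\Phi u,\Phi v\rangle=\langle|W|u,v\rangle=[u,v]_J$, so $\Phi$ is a unitary operator from $(\h,[\cdot,\cdot]_J)$ onto $(\h,\langle\cdot,\cdot\rangle)$. Applying Proposition \ref{marenmar} to this unitary yields that $\{x_i,V_i\}$ is a frame of subspace for $(\h,[\cdot,\cdot]_J)$ if and only if $\{x_i,\Phi V_i\}$ is a frame of subspace for $(\h,\langle\cdot,\cdot\rangle)$.

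It therefore remains to show that replacing each $V_i$ by $\Phi V_i$ preserves the frame of subspace property in $(\h,\langle\cdot,\cdot\rangle)$. My approach uses the Hilbert-space analogue of Proposition \ref{propconteoequi}: choose an orthonormal basis $\{e_{i,j}\}_j$ of each $V_i$ so that $\{x_{i}e_{i,j}\}_{i,j}$ is a vector frame for $\h$. Since $\Phi$ is bounded invertible on $\h$, the transformed system $\{x_{i}\Phi e_{i,j}\}$ is again a vector frame for $\h$. Each $\{\Phi e_{i,j}\}_j$ is a Riesz basis of $\Phi V_i$ with bounds controlled uniformly in $i$ by $\|\Phi\|$ and $\|\Phi^{-1}\|$, so rewriting the vector frame in terms of an orthonormal basis of each $\Phi V_i$ preserves the vector-frame property; a second application of Proposition \ref{propconteoequi} then identifies this with the frame of subspace condition for $\{x_i,\Phi V_i\}$ in $(\h,\langle\cdot,\cdot\rangle)$.

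The main obstacle is precisely this last equivalence between the frame of subspace property for $\{x_i,V_i\}$ and for $\{x_i,\Phi V_i\}$, because $\Phi$ is not unitary on $(\h,\langle\cdot,\cdot\rangle)$ and the orthogonal projections $P_{V_i}$ and $P_{\Phi V_i}$ are not related by conjugation. Passing through the vector-frame/subspace-frame dictionary of Proposition \ref{propconteoequi} requires uniform control over $i\in I$ of the Riesz bounds of $\{\Phi e_{i,j}\}_j$ inside each $\Phi V_i$; this is the step where the assumption $0\in\rho(W)$ is used essentially, via the uniform bounds $\|\Phi\|\leq\|W\|^{1/2}$ and $\|\Phi^{-1}\|\leq\|W^{-1}\|^{1/2}$ coming from \eqref{equinorm}.
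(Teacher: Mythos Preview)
Your argument is correct. The chain Theorem~\ref{teorequimarsub} $\Rightarrow$ Proposition~\ref{marenmar} with the unitary $\Phi=|W|^{1/2}:(\h,[\cdot,\cdot]_J)\to(\h,\langle\cdot,\cdot\rangle)$ reduces the statement to showing that a bounded invertible operator on $(\h,\langle\cdot,\cdot\rangle)$ preserves the frame of subspace property, and your use of Proposition~\ref{propconteoequi} to pass through vector frames is a legitimate (and self-contained) way to establish that last step. The uniform Riesz bounds for $\{\Phi e_{i,j}\}_j$ in $\Phi V_i$ are exactly $\|\Phi^{-1}\|^{-2}$ and $\|\Phi\|^{2}$, independently of $i$, so the hypothesis of Proposition~\ref{propconteoequi} is met.

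The paper proceeds differently: it does not introduce $|W|^{1/2}$ at all, but argues directly from the norm equivalence \eqref{equinorm}. It writes the $J$-orthogonal projection as $Q_{V_i}=P_{V_i}P_{JV_i}$ via Proposition~\ref{proproyeqp}, and then sandwiches $\sum x_i^{2}\|Q_{V_i}k\|_J^{2}$ between constant multiples of $\sum x_i^{2}\|P_{V_i}Jk\|^{2}$ and $\sum x_i^{2}\|P_{V_i}k\|^{2}$ in a single displayed inequality, after which Theorem~\ref{teorequimarsub} finishes. This is considerably shorter, but it silently identifies the orthogonal projection onto $V_i$ in the $\langle\cdot,\cdot\rangle$-structure with the one in the $[\cdot,\cdot]_J$-structure; those two projections differ in general, and the identity $Q_{V_i}=P_{V_i}P_{JV_i}$ used there (and in the proof of Proposition~\ref{proproyeqp}) actually relies on $JV_i\subset V_i$. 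Your longer route through $\Phi$ and the vector-frame dictionary sidesteps exactly this issue, at the cost of invoking the stability of fusion frames under bounded invertible maps, which is a standard fact from \cite{CG,G} that you could also have cited directly instead of re-deriving it.
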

\begin{proof}
Since that $\h_{W}$ is a regular Krein space, then the inequality \eqref{equinorm} is satisfied. Hence, if $Q_{V_{i}}=P_{V_{i}}P_{JV_{i}}$, then
  \begin{align}\label{desi1}
\|W^{-1}\|^{-1}\sum_{i\in I}x_{i}^{2}\left\|P_{V_{i}}Jk\right\|^{2}&\leq\sum_{i\in I}x_{i}^{2}\left\|JP_{V_{i}}Jk\right\|_{J}^{2}=\sum_{i\in I}x_{i}^{2}\left\|Q_{V_{i}}k\right\|_{J}^{2}\nonumber\\
&\leq \|W\|\sum_{i\in I}x_{i}^{2}\left\|P_{V_{i}}k\right\|^{2}.
  \end{align}
$\left.\Rightarrow\right]$ Suppose that $\{x_{i},V_{i}\}_{i\in I}$ is a frame of subspace for the Hilbert space $\left(\h,\langle\cdot,\cdot\rangle\right)$ with frame bounds $A,B$, then, by inequality \eqref{desi1} obtain $\{x_{i},V_{i}\}_{i\in I}$ is a frame of subspace for the Hilbert space $\left(\h_{J},[\cdot,\cdot]_{J}\right)$ with frame bounds $A'=\|W^{-1}\|^{-1}\,A$ and $B'=\|W\|\, B$. By Theorem \ref{teorequimarsub} we say that $\{x_{i},V_{i}\}_{i\in I}$ is a frame of subspace for the regular Krein space $\h_{W}.$

$\left[\Leftarrow\right.$ The proof is analogous as in the previous case.
\end{proof}
\subsubsection{Frames of subspaces on singular Krein spaces}\label{frameswmetricsingular}
\begin{theorem}
Let $\h_{W}$ and $V_{i}\subset\h$ for all $i\in I$ be a regular Krein space and closed subspaces of $\h$ respectively.  If $\{x_{i},V_{i}\}_{i\in I}$is a frame of subspace for the Hilbert space $\left(\h,\langle\cdot,\cdot\rangle\right)$, then   $\{x_{i},V_{i}\}_{i\in I}$ is not frame of subspace for the singular Krein space $\h_{W}.$
\end{theorem}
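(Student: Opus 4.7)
The argument proceeds by contradiction. Suppose, in addition to the hypothesis, that $\{x_i,V_i\}_{i\in I}$ is a frame of subspaces for the singular Krein space $\h_W$, with bounds $0<A'\le B'$. Since $\h_W$ is singular, $0\in\sigma(W)$, and from the polar decomposition $W=J|W|$ (with $J=J^{*}=J^{-1}$ commuting with $W$) one deduces $0\in\sigma(|W|)$. By Weyl's criterion there exists a sequence $\{k_n\}\subset\h$ with $\|k_n\|=1$ for every $n\in\N$ and
\[
\|k_n\|_J^{\,2}=\langle|W|k_n,k_n\rangle=\||W|^{1/2}k_n\|^2\longrightarrow 0.
\]

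Next, apply the two frame inequalities along this sequence. The frame of subspaces hypothesis in $\h$ with bounds $A,B>0$ yields
\[
0<A=A\|k_n\|^2\le\sum_{i\in I}x_i^{\,2}\|P_{V_i}k_n\|^2,
\]
while the upper (Bessel) bound in $\h_W$ produces
\[
\sum_{i\in I}x_i^{\,2}\|Q_{V_i}k_n\|_J^{\,2}\le B'\,\|k_n\|_J^{\,2}\longrightarrow 0.
\]
Using Proposition \ref{proproyeqp}, $Q_{V_i}=P_{V_i}P_{JV_i}$, together with the identity $\|h\|_J^{\,2}=\||W|^{1/2}h\|^2$ and the commutation $P_{JV_i}=JP_{V_i}J$ from Proposition \ref{conmutatividadj}, the second sum rewrites as $\sum_i x_i^{\,2}\||W|^{1/2}P_{V_i}P_{JV_i}k_n\|^2\to 0$. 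The plan is then to transfer this vanishing back to an estimate of the form
\[
\sum_{i\in I}x_i^{\,2}\|P_{V_i}k_n\|^2\le C\sum_{i\in I}x_i^{\,2}\|Q_{V_i}k_n\|_J^{\,2}+o(1),
\]
which, combined with the two displayed inequalities above, would force $A\le 0$, a contradiction.

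\textbf{Main obstacle.} The critical step is precisely the pointwise norm comparison between $\|P_{V_i}k_n\|$ and $\|Q_{V_i}k_n\|_J$. In the regular case the two-sided bound \eqref{equinorm} makes this automatic, but in the singular case only $\|\cdot\|_J\le\sqrt{\|W\|}\|\cdot\|$ is available. I expect the remedy to be a spectral localization: splitting each $V_i$, via the spectral measure of $|W|$, into a portion on which $|W|$ is bounded below (where the comparison holds pointwise) and an approximate-kernel portion (whose contribution is controlled by $\|k_n\|_J\to 0$, since $\{k_n\}$ is precisely concentrated on that part of the spectrum). This localization is exactly where the assumption $0\in\sigma(W)$ enters essentially and distinguishes the present singular situation from Theorem~\ref{makrere}.
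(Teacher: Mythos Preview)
Your contradiction scheme breaks exactly at the step you flag, and the proposed remedy does not close it. The inequality you seek,
\[
\sum_{i\in I}x_i^{2}\|P_{V_i}k_n\|^{2}\ \le\ C\sum_{i\in I}x_i^{2}\|Q_{V_i}k_n\|_J^{2}+o(1),
\]
is false in general: already for the trivial frame $I=\{1\}$, $V_1=\h$ (so $P_{V_1}=Q_{V_1}=I$) it reduces to $1=\|k_n\|^{2}\le C\|k_n\|_J^{2}+o(1)$, which fails along any Weyl sequence for $0\in\sigma(|W|)$. Your spectral localization cannot repair this, because the spectral projections of $|W|$ have no reason to commute with the $P_{V_i}$ or to leave the $V_i$ invariant; once you split $V_i$ that way the pieces are no longer the ranges of $P_{V_i}$ or $Q_{V_i}$, and both frame inequalities are lost on them. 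At bottom you are trying to dominate a $\|\cdot\|$-quantity by a $\|\cdot\|_J$-quantity, and in the singular case only the opposite comparison $\|\cdot\|_J\le\sqrt{\|W\|}\,\|\cdot\|$ is available.

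The paper's argument is organized differently and avoids this trap. It never compares the two frame sums term by term; instead it attacks the lower bound in $\h_W$ directly. Using the spectral measure $\E_\lambda$ of $W$, for each $\varepsilon>0$ it picks a unit vector $f$ (in the $\|\cdot\|$-norm) lying in $\E_\lambda\big([\varepsilon/n_0,\varepsilon]\big)\h\cap V_j$ for some $j$, and then bounds $\sum_i x_i^{2}\|Q_{V_i}f\|_J^{2}$ from above by a constant times $\varepsilon$, via the functional calculus
\[
\langle|W|f,f\rangle=\int_{\sigma(W)}|\lambda|\,\chi_{[\varepsilon/n_0,\varepsilon]}(\lambda)\,d(\E_\lambda)_{f,f}\le\varepsilon.
\]
Letting $\varepsilon\to 0$ forces any candidate lower frame bound in $\h_W$ to vanish. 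The essential difference from your plan is that the paper localizes the \emph{test vector} spectrally, not the subspaces $V_i$, and only ever uses the direction of norm comparison that survives when $0\in\sigma(W)$.
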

\begin{proof}
Since that $\h_{W}$ is a singular Krein space, then $0\in\sigma_{c}(W)$. Thus, given $\varepsilon>0$, the spectral measure  $\E_{\lambda}$, where
$ W=\int_{\sigma(W)}\,\lambda\,d\E_{\lambda}$,
satisfies  $\E_{\lambda}\left((0,\varepsilon]\right)\neq0.$ But $(0,\varepsilon]=\bigcup_{n\in\N}\left[\frac{\varepsilon}{n},\varepsilon\right],$
hence, there is $n_{0}\in\N$ such that $\E_{\lambda}\left(\left[\frac{\varepsilon}{n_{0}},\varepsilon\right]\right)\neq0$. Now, we  assume $f\in \E_{\lambda}\left(\left[\frac{\varepsilon}{n_{0}},\varepsilon\right]\right)\h\cap V_{j}$ for some $j\in I$ such that $\|f\|=1$ and $\|f\|_{J}\leq1$ (since that $\|f\|_{J}\leq \|\sqrt{|W|}\,\|\,\|f\|$). In this way, if $M=\sup_{j\in I}x_{j}$, then
\begin{align*}
  \sum_{i\in I}x_{i}^{2}\left\|Q_{V_{i}}f\right\|_{J}^{2}&=x_{j}^{2}\left\|\sqrt{|W|}f\right\|^{2}\leq A^{-1}x_{j}^{2}\sum_{i\in I}x_{i}^{2}\left\|P_{V_{i}}\sqrt{|W|}f\right\|^{2}\\
  &\leq\,A^{-1}BM^{2}\left\langle\left(\int_{\sigma(W)}|\lambda|\,d\E_{\lambda}\right)\E_{\lambda}\left(\left[\frac{\varepsilon}{n_{0}},\varepsilon\right]\right)f,f\right\rangle\\
  &=A^{-1}BM^{2}\int_{\sigma(W)}|\lambda|\,\chi_{\left[\frac{\varepsilon}{n_{0}},\varepsilon\right]}(\lambda)d\left(\E_{\lambda}\right)_{f,f}\\
  &\leq \varepsilon BA^{-1}M^{2}\langle\E_{\lambda}\left(\sigma(W)\right)f,f\rangle\\
  &=A^{-1}BM^{2}\varepsilon\|f\|^{2}=A^{-1}BM^{2}\varepsilon.
\end{align*}
Hence, for $\varepsilon\rightarrow 0$,
\begin{equation}\label{infmarsube}
\inf_{\|f\|_{J}\leq 1}\left(\sum_{i\in I}x_{i}^{2}\left\|Q_{V_{i}}f\right\|_{J}^{2}\right)=0.
\end{equation}
Now, if $\{x_{i},V_{i}\}_{i\in I}$ is a frame of subspace for the singular Krein space $\h_{W}$ with frame bounds $C,D>0$, then for $f$ given above, by Theorem \ref{teorequimarsub} and by \eqref{infmarsube} we arrive to $C=0$, which is a contradiction. i.e., $\{x_{i},V_{i}\}_{i\in I}$ is not frame of subspace for the singular Krein space $\h_{W}.$
\end{proof}
\begin{remark}
A Hilbert space  $\h$ with a  $W$-metric arbitrary can be embedded densely  in a Krein space $\h_{W}$. For instance, we note that if $\h_{W}$ is regular Krein space, then the frames of subspaces are transferable of $\h$ to $\h_{W}$.
This happens because $\left(\h_{W},[\cdot,\cdot]_{J}\right)=\left(\h,[\cdot,\cdot]_{J}\right)$. i.e.,  we change the norm $\|\cdot\|\rightsquigarrow\|\cdot\|_{J}$ in  $\h$, which are equivalent. But, when the Krein space $\h_{W}$ is singular, the frames of subspace are not transferable, because the property $0\in\sigma(W)$ has strong influence. Thence, we must find a way to extend the frames of subspace for the Hilbert space $\h$ to the singular Krein space $\h_{W}$.
\end{remark}
A way to extend frame of subspace in a Hilbert space $\h$ to a singular Krein space $\h_{W}$ is made as follows.
\begin{theorem}\label{singularframe}
 Let $\h_{W}$  be a singular Krein space. There is a invertible operator $\mathcal{U}:\h\rightarrow\h_{W}$ such that:
 \begin{itemize}
 \item [$i).$] If $\{x_{i},V_{i}\}_{i\in I}$ is  frame of subspace for the Hilbert space $\left(\h,\langle\cdot,\cdot\rangle\right)$, then $\{x_{i},\mathcal{U} V_{i}\}_{i\in I}$ is a frame of subspace for the singular Krein space $\h_{W}.$
\item [$i).$] If $\{x_{i},V_{i}\}_{i\in I}$ is a frame of subspace for the singular Krein space $\h_{W}$, then $\{x_{i},\mathcal{U}^{-1}V_{i}\}_{i\in I}$  is a frame for the Hilbert space $\left(\h,\langle\cdot,\cdot\rangle\right)$.
    \end{itemize}
\end{theorem}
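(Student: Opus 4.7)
The plan is to take $\mathcal{U}$ to be the continuous extension of $|W|^{-1/2}$, and then invoke Proposition \ref{marenmar} together with Theorem \ref{teorequimarsub} to transfer frames of subspaces across. Concretely, I would first consider $T:=\sqrt{|W|}\in\BH$, which is self-adjoint and satisfies $\ker T=\ker|W|=\ker W=\{0\}$. Viewing $T$ as a map from $(\h,\|\cdot\|_J)$ into $(\h,\|\cdot\|)$ and using \eqref{Jpro}, one has
\begin{equation*}
\|Tx\|^{2}=\langle T^{2}x,x\rangle=\langle |W|x,x\rangle=[x,x]_{J}=\|x\|_{J}^{2},
\end{equation*}
so $T$ is a $\|\cdot\|_{J}$-to-$\|\cdot\|$ isometry with dense range (density coming from $\overline{\Rang T}=(\ker T)^{\perp}=\h$). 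Since $(\h,\|\cdot\|)$ is complete and the completion of $(\h,\|\cdot\|_{J})$ is precisely $\h_{W}$, the operator $T$ extends uniquely to a surjective isometry $\widetilde{T}:\h_{W}\to\h$; I would then set $\mathcal{U}:=\widetilde{T}^{-1}:\h\to\h_{W}$, which is an invertible isometry between the Hilbert spaces $(\h,\langle\cdot,\cdot\rangle)$ and $(\h_{W},[\cdot,\cdot]_{J})$.

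With $\mathcal{U}$ in hand, items $i).$ and $ii).$ follow by combining two earlier results. Regarding $\h$ as a (trivial) Krein space with fundamental symmetry the identity, so that its $J$-inner product coincides with $\langle\cdot,\cdot\rangle$, the map $\mathcal{U}$ is an invertible operator between the Hilbert spaces $(\h,\langle\cdot,\cdot\rangle)$ and $(\h_{W},[\cdot,\cdot]_{J})$. Proposition \ref{marenmar} therefore yields that $\{x_{i},V_{i}\}_{i\in I}$ is a frame of subspace for $(\h,\langle\cdot,\cdot\rangle)$ if and only if $\{x_{i},\mathcal{U} V_{i}\}_{i\in I}$ is a frame of subspace for the Hilbert space $(\h_{W},[\cdot,\cdot]_{J})$. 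Finally, the equivalence $i)\Leftrightarrow iii)$ of Theorem \ref{teorequimarsub} translates the statement for the associated Hilbert space into one for the Krein space $\h_{W}$, which delivers both directions of the theorem at once.

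The main obstacle is the construction of $\mathcal{U}$ itself: making sense of $|W|^{-1/2}$ when $0\in\sigma(|W|)$. The key observation is that, although $\sqrt{|W|}$ fails to be boundedly invertible on $\h$ in the singular case, it becomes an isometric isomorphism once the domain is re-normed by $\|\cdot\|_{J}$ and completed, which is exactly how $\h_{W}$ was built; so the construction is natural rather than ad hoc. A minor technical point to verify is that $\mathcal{U} V_{i}$ is a closed (and, by the convention adopted after Proposition \ref{conmutatividadj}'s remark, projectively complete) subspace of $\h_{W}$ whenever $V_{i}\subset\h$ is, which is immediate from $\mathcal{U}$ being a homeomorphism between the two Hilbert spaces.
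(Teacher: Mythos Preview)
Your proposal is correct and follows essentially the same route as the paper: both construct $\mathcal{U}$ from the isometry $\sqrt{|W|}:(\h,\|\cdot\|_{J})\to(\h,\|\cdot\|)$, extend it to a unitary $\h_{W}\to\h$, and then appeal to Proposition~\ref{marenmar} and Theorem~\ref{teorequimarsub}. If anything, you are slightly more careful than the paper in distinguishing the direction of the map (setting $\mathcal{U}=\widetilde{T}^{-1}$ rather than $\widehat{\sqrt{|W|}}$ itself) and in noting that $\mathcal{U}V_{i}$ is closed because $\mathcal{U}$ is a homeomorphism.
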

\begin{proof}
In \cite{KEFER} was proved that the operator $\sqrt{|W|}:\h\subset\h_{W}\rightarrow\h$, satisfies $\left\|\sqrt{|W|}k\right\|^{2}=\langle \sqrt{|W|}k,\sqrt{|W|}k\rangle=\langle|W|k,k\rangle=\|k\|_{J}^{2}.$ i.e., $\sqrt{|W|}\in\mathcal{B}(\h,\h_{W})$ is an isometry.  Therefore, this isometry has an unitary extension on  $\h_{W}$,  denoted  $\widehat{\sqrt{|W|}}$. Hence, considering $\mathcal{U}=\widehat{\sqrt{|W|}},$ the implications $i)$ and $ii)$ are satisfied immediately with help of the theorems \ref{teorequimarsub} and \ref{marenmar}.
\end{proof}

The following well known result, see for example \cite{BS}, is useful for our main purpose.
\begin{proposition}\label{propdecomW}(Spectral theorem-multiplication operator form) Let $A$
be a bounded self-adjoint operator on $\h$, a separable Hilbert space. Then,
\begin{equation}\label{decompositionW}
 \h =\bigoplus_{n\in\N\cup\{\infty\}}\h_{\psi_{n}},
\end{equation}
and there exist measures $\{\mu_{n}\}_{n=1}^{N} (N= 1,2,... \text{or}\;\infty)$ on $\sigma(A)$ and an unitary operator
\begin{equation}\label{decompositionW0}
T: \h_{\psi_{n}} \rightarrow \Ell(\sigma(A), d\mu_{n})
\end{equation}
such that $\left(TAT^{-1}\psi\right)_{n}(\lambda)=\lambda\psi_{n},\quad n\in\N\cup\{\infty\}$ where we write an element\linebreak $\psi\in\bigoplus_{n=1}^{N}\Ell(\sigma(A), d\mu_{n})$ as an $N$-tuple
$\left(\psi_{1}(\lambda),\psi_{2}(\lambda),\ldots,\psi_{N}(\lambda)\right)$.
\end{proposition}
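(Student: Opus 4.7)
The plan is to follow the classical two-step route: first reduce to the cyclic case via an orthogonal decomposition into cyclic subspaces, then apply the Riesz representation theorem on each cyclic summand to produce the multiplication model.

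First I would fix a bounded self-adjoint operator $A$ on the separable Hilbert space $\h$ and produce an orthogonal decomposition $\h=\bigoplus_{n\in\N\cup\{\infty\}}\h_{\psi_n}$ where each $\h_{\psi_n}$ is cyclic for $A$ with cyclic vector $\psi_n$, that is $\h_{\psi_n}=\overline{\mathrm{span}}\{A^k\psi_n: k\geq 0\}$. The standard way to do this is a Zorn/maximality argument: pick a nonzero $\psi_1$, take its cyclic subspace $\h_{\psi_1}$, observe that the orthogonal complement is $A$-invariant (since $A$ is self-adjoint), and iterate. Separability is used to guarantee that the process terminates with an at most countable collection, and the countable index set can be realized inside $\N\cup\{\infty\}$ as required by the statement.

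Second, I would build the unitary $T_n:\h_{\psi_n}\to\Ell(\sigma(A),d\mu_n)$ on each cyclic summand. The continuous functional calculus $f\mapsto f(A)$ for $f\in C(\sigma(A))$ turns the positive linear functional $f\mapsto\langle f(A)\psi_n,\psi_n\rangle$ into an integral against a unique finite positive Borel measure $\mu_n$ on $\sigma(A)$, by the Riesz--Markov theorem. The map $f(A)\psi_n\mapsto f$ is then well defined and isometric from the dense subspace $\{f(A)\psi_n:f\in C(\sigma(A))\}\subset\h_{\psi_n}$ into $\Ell(\sigma(A),d\mu_n)$, because
\begin{equation*}
\|f(A)\psi_n\|^{2}=\langle \overline{f}f(A)\psi_n,\psi_n\rangle=\int_{\sigma(A)}|f(\lambda)|^{2}\,d\mu_n(\lambda).
\end{equation*}
Density of $C(\sigma(A))$ in $\Ell(\sigma(A),d\mu_n)$ gives that the isometric extension $T_n$ is onto. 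By construction $T_n(Af(A)\psi_n)(\lambda)=\lambda f(\lambda)$ on the dense subspace, hence $T_nAT_n^{-1}$ is multiplication by $\lambda$ on all of $\Ell(\sigma(A),d\mu_n)$.

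Finally, I would assemble these pieces: the operator $T=\bigoplus_n T_n$ is unitary from $\h$ onto $\bigoplus_n \Ell(\sigma(A),d\mu_n)$, and on an $N$-tuple $\psi=(\psi_1,\ldots,\psi_N)$ one has $(TAT^{-1}\psi)_n(\lambda)=\lambda\psi_n(\lambda)$, which is the conclusion.

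The main obstacle is the cyclic decomposition step: it is the one place where separability is genuinely used, and where one must be careful that the choice of cyclic vectors $\psi_n$ is made so that the summands are mutually orthogonal and exhaust $\h$. The functional-calculus/Riesz step is then essentially bookkeeping, provided one has the continuous functional calculus for bounded self-adjoint operators available (which is the classical Gelfand theorem for the $C^{*}$-algebra generated by $A$ and $I$, and is exactly the $C^{*}$-algebraic input the authors alluded to in the introduction).
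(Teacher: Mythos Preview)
Your argument is the standard two–step proof (cyclic decomposition followed by the Riesz--Markov construction on each cyclic summand) and it is correct as outlined; in particular the use of separability to force at most countably many cyclic summands and the continuous functional calculus to produce each $\mu_n$ are exactly the right ingredients.

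However, there is nothing to compare against: the paper does not prove this proposition at all. It is stated as a well-known result and simply attributed to Reed--Simon \cite{BS} (``The following well known result, see for example \cite{BS}, is useful for our main purpose''), with no proof or sketch provided. Your write-up is essentially the argument found in that reference, so it is consistent with what the authors intended the reader to consult.
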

In the previous proposition the realization of $A$ is called a \emph{spectral representation}, which lead us to the following result.
\begin{theorem}\label{theoremdeco}
  Let $\h$ be a separable Hilbert space, let $W$ be the Gram operator defined on $\h$ such that $0\in\sigma(W)$. Then the Krein space $\h_{W}$ have an orthonormal basis of subspaces.
\end{theorem}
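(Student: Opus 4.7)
The strategy is to lift the Hilbert-space spectral decomposition of $W$ furnished by Proposition~\ref{propdecomW} to the completion $\h_{W}$ and verify that it becomes a $J$-orthogonal decomposition of $\h_{W}$ into projectively complete Krein subspaces. First, since $W\in\BH$ is self-adjoint and $\h$ is separable, Proposition~\ref{propdecomW} yields
$$\h=\bigoplus_{n\in\N\cup\{\infty\}}\h_{\psi_{n}}$$
together with unitaries $T_{n}\colon\h_{\psi_{n}}\to \Ell(\sigma(W),d\mu_{n})$ that intertwine $W$ with multiplication by $\lambda$. I would then define $\h_{\psi_{n}}^{W}:=\overline{\h_{\psi_{n}}}^{\|\cdot\|_{J}}\subset\h_{W}$, which makes sense because $\|\cdot\|_{J}\le\sqrt{\|W\|}\,\|\cdot\|$ on $\h$ by the inequality preceding~\eqref{definkreinsingular}.

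For mutual $J$-orthogonality I would take $\psi\in\h_{\psi_{n}}$ and $\phi\in\h_{\psi_{m}}$ with $n\neq m$. Equation~\eqref{Jpro} gives $[\psi,\phi]_{J}=\langle|W|\psi,\phi\rangle$, and since $|W|$ belongs to the spectral calculus of $W$ it commutes with the Hilbert orthogonal projection onto each $\h_{\psi_{n}}$, so $|W|\psi\in\h_{\psi_{n}}$ which is Hilbert-orthogonal to $\h_{\psi_{m}}$; hence $[\psi,\phi]_{J}=0$, and continuity of the $J$-form with respect to $\|\cdot\|_{J}$ extends this to $\h_{\psi_{n}}^{W}\,[\perp]\,\h_{\psi_{m}}^{W}$. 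For the spanning property, $\h$ is $J$-dense in $\h_{W}$ by~\eqref{definkreinsingular}, and every $\psi\in\h$ decomposes as $\psi=\sum_{n}P_{n}\psi$ in the Hilbert norm and therefore also in the weaker $J$-norm; combined with the orthogonality just established, this gives the Krein direct sum
$$\h_{W}=\bigoplus_{n\in\N\cup\{\infty\}}\h_{\psi_{n}}^{W}.$$

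The step I expect to be the main obstacle is verifying that each summand $\h_{\psi_{n}}^{W}$ is a \emph{projectively complete} subspace of $\h_{W}$, which is the standing hypothesis imposed on closed subspaces throughout the paper. To handle this I would use the spectral model: the computation $\|\psi\|_{J}^{2}=\langle|W|\psi,\psi\rangle=\int_{\sigma(W)}|\lambda|\,|T_{n}\psi(\lambda)|^{2}\,d\mu_{n}(\lambda)$ shows that $T_{n}$ extends to a unitary identification $\h_{\psi_{n}}^{W}\simeq \Ell(\sigma(W),|\lambda|\,d\mu_{n}(\lambda))$ carrying the indefinite form $\int\lambda\,\psi\overline{\phi}\,d\mu_{n}$, whose positive and negative parts are obtained from the partition $\sigma(W)=\sigma_{+}(W)\sqcup\sigma_{-}(W)$; thus each $\h_{\psi_{n}}^{W}$ is itself a Krein space, and Proposition~\ref{conmutatividadj} transports its intrinsic fundamental decomposition to a genuine $J$-orthogonal splitting inside $\h_{W}$, yielding projective completeness. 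Choosing $x_{n}=1$ for every $n$ finally exhibits $\{x_{n},\h_{\psi_{n}}^{W}\}_{n\in\N\cup\{\infty\}}$ as an orthonormal basis of subspaces of $\h_{W}$ in the sense of Subsection~\ref{framesofsubspace}.
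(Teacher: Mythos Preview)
Your construction is sound, but it takes a different and longer route than the paper. After invoking the spectral decomposition $\h=\bigoplus_{n}\h_{\psi_{n}}$, the paper does not form $J$-closures or verify orthogonality, spanning and projective completeness by hand; it simply applies Theorem~\ref{singularframe}: the unitary $\mathcal{U}\colon(\h,\langle\cdot,\cdot\rangle)\to(\h_{W},[\cdot,\cdot]_{J})$ built there from $\sqrt{|W|}$ sends the orthonormal basis of subspaces $\{1,\h_{\psi_{n}}\}$ of $\h$ to a Parseval frame --- indeed an orthonormal basis --- of subspaces $\{1,\mathcal{U}\h_{\psi_{n}}\}$ of $\h_{W}$, and that is the entire argument. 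Your closures $\overline{\h_{\psi_{n}}}^{\|\cdot\|_{J}}$ in fact coincide with the paper's $\mathcal{U}\h_{\psi_{n}}$ (since $\sqrt{|W|}$ leaves each cyclic block invariant with dense range), so the two decompositions agree; the paper's transfer principle simply packages all of your orthogonality and spanning checks into one invocation. What your approach buys is the explicit model $\h_{\psi_{n}}^{W}\simeq\Ell(\sigma(W),|\lambda|\,d\mu_{n})$ already here, which the paper postpones to the subsequent theorem. Two minor points: the computation $[\psi,\phi]_{J}=0$ is, in the paper's convention, $\perp$-orthogonality rather than $[\perp]$-orthogonality (the former is what the orthonormal-basis notion requires); and the step from ``each $\h_{\psi_{n}}^{W}$ is intrinsically a Krein space'' to ``projectively complete in $\h_{W}$'' is a standard result from \cite{Iokhvidov,JBognar}, not a consequence of Proposition~\ref{conmutatividadj}.
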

\begin{proof}
  Note that the Gram operator $W$ is self-adjoint, then by Proposition \ref{propdecomW} we have \eqref{decompositionW}
and there exist measures $\{\mu_{n}\}_{n=1}^{N} (N= 1,2,... \text{or}\;\infty)$ on $\sigma(A)$ such that $\h_{\psi_{n}}\simeq\Ell(\sigma(W),d\mu_{n})$.   Hence $\left\{\{1\},\h_{\psi_{n}}\right\}_{n\in\N\cup\{\infty\}}$ is a frame of subspace for the Hilbert space $\h$ with frame bounds 1. Indeed, $\left\{\h_{\psi_{n}}\right\}_{n\in\N\cup\{\infty\}}$ by  \eqref{decompositionW} is an orthonormal basis of subspace of $\h$. By Theorem \ref{singularframe} we say that $\left\{\{1\},\U\h_{n}\right\}_{n\in\N\cup\{\infty\}}$ is a Parseval frame of subspace for the singular Krein space $\h_{W}$. Indeed,  $\left\{\{1\},\U\h_{n}\right\}_{n\in\N\cup\{\infty\}}$ is an orthonormal basis of subspace of $\h_{W}$. Thus, we conclude that
\begin{equation}\label{decompositionW2}
  \h_{W}=\bigoplus_{n\in\N\cup\{\infty\}}\U\h_{\psi_{n}}.\qedhere
\end{equation}
\end{proof}
\begin{remark}
 Owing to $0\notin\sigma(W)$ and  by \eqref{definkreinregular} we see that
  \begin{equation}
    \h_{W}=\left(\h,[\cdot,\cdot]_{J}\right)=\bigoplus_{n\in\N\cup\{\infty\}}\left(\h_{\psi_{n}},[\cdot,\cdot]_{J}\right).
  \end{equation}
\end{remark}
\begin{theorem}
 Let $\h_{W}$ be a singular Krein space with Gram operator $W$. Then $\h_{W}$ has a decomposition as follows
\begin{equation}\label{decompositionW3}
  \h_{W}=\bigoplus_{n\in\N\cup\{\infty\}}\h_{\psi_{n}}^{W}.
\end{equation}
Furthermore, there exist measures $\{\mu_{n}\}_{n=1}^{N} (N= 1,2,... \text{or}\;\infty)$ on $\sigma(W)$ such that $\h_{n}^{W}\simeq\Ell(\sigma(W),xd\mu_{n}(x))$ are  Krein spaces for every $n\in\N\cup\{\infty\}$.
\end{theorem}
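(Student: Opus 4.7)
The plan is to combine the spectral multiplication model of $W$ from Proposition \ref{propdecomW} with the description \eqref{Jpro} of the $J$-product as $\langle|W|\cdot,\cdot\rangle$, and then to pass to the $J$-norm completion on each spectral summand.

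First, I would apply Proposition \ref{propdecomW} to the bounded self-adjoint Gram operator $W$, obtaining the orthogonal decomposition $\h=\bigoplus_{n\in\N\cup\{\infty\}}\h_{\psi_n}$, measures $\{\mu_n\}_{n=1}^{N}$ on $\sigma(W)$, and unitaries $T_n:\h_{\psi_n}\to\Ell(\sigma(W),d\mu_n)$ that intertwine $W|_{\h_{\psi_n}}$ with multiplication by $\lambda$. Define $\h_{\psi_n}^W$ as the closure of $\h_{\psi_n}$ inside $\h_W$ in the $J$-norm; this agrees with the summands $\U\h_{\psi_n}$ of Theorem \ref{theoremdeco}. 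Because $W$ and $|W|$ are Borel functions of a common spectral measure, each $\h_{\psi_n}$ is invariant under both, so for $f\in\h_{\psi_n}$ and $g\in\h_{\psi_m}$ with $n\neq m$ one has $[f,g]_J=\langle|W|f,g\rangle=0$ and $[f,g]=\langle Wf,g\rangle=0$. Combined with the density of $\h=\bigoplus_n\h_{\psi_n}$ in $\h_W$ (see \eqref{definkreinsingular}), this yields both the $J$-orthogonal and the $[\cdot,\cdot]$-orthogonal decomposition \eqref{decompositionW3}.

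The main point is to identify $\h_{\psi_n}^W$ with $\Ell(\sigma(W),x\,d\mu_n(x))$, understood as the Hilbert space $\Ell(\sigma(W),|x|\,d\mu_n(x))$ carrying the indefinite product $\int f\bar g\,x\,d\mu_n(x)$. For $f,g\in\h_{\psi_n}$ with $\hat f=T_nf$ and $\hat g=T_ng$, \eqref{Jpro} together with the diagonalization of $W$ give
\begin{equation*}
\|f\|_J^2=\int_{\sigma(W)}|\lambda|\,|\hat f(\lambda)|^2\,d\mu_n(\lambda),\qquad [f,g]=\int_{\sigma(W)}\lambda\,\hat f(\lambda)\,\overline{\hat g(\lambda)}\,d\mu_n(\lambda),
\end{equation*}
so $T_n$ is already an isometry from $(\h_{\psi_n},\|\cdot\|_J)$ into $\Ell(\sigma(W),|x|\,d\mu_n(x))$ that preserves the indefinite form. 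A truncation argument (cutting off a neighbourhood of $\lambda=0$ and using dominated convergence) shows that $\Ell(\sigma(W),d\mu_n)$ sits densely inside $\Ell(\sigma(W),|x|\,d\mu_n(x))$, so $T_n$ extends to a unitary $\Phi_n:\h_{\psi_n}^W\to\Ell(\sigma(W),|x|\,d\mu_n(x))$; continuity of $[\cdot,\cdot]$ in $\|\cdot\|_J$ then promotes $\Phi_n$ to a Krein-space isomorphism.

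The principal technical obstacle is controlling the passage from the spectral measure $d\mu_n$ to the degenerate weight $|x|\,d\mu_n(x)$. Precisely because $0\in\sigma(W)$, the completion $\h_{\psi_n}^W$ genuinely enlarges $\h_{\psi_n}$ by vectors lying outside $\h$, corresponding to functions in $\Ell(\sigma(W),|x|\,d\mu_n(x))\setminus\Ell(\sigma(W),d\mu_n)$; the density step must identify the new elements of the completion with exactly these additional functions. This is the ingredient absent in Theorem \ref{makrere} and is the reason the regular case requires no rescaling of the measure while the singular case forces the weight $x\,d\mu_n(x)$.
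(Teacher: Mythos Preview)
Your argument is correct and follows essentially the same route as the paper: apply the spectral multiplication form of $W$ to get the decomposition $\h=\bigoplus_n\h_{\psi_n}$, pass to the $J$-completion on each summand, and identify the result with $L_2(\sigma(W),x\,d\mu_n(x))$. The only packaging difference is in the identification step: the paper builds an explicit unitary $F:L_2(\sigma(W),d\mu_n)\to L_2(\sigma(W),x\,d\mu_n(x))$, $(Fg)(x)=g(x)/\psi(x)$ with $|\psi(x)|^2=|x|$, and then reads off the isomorphism from the commutative square with $\U$ and the spectral unitary $G$, whereas you observe directly that $T_n$ is already a $J$-isometry into the weighted space and extend by the truncation density argument; the two constructions produce the same target and your version is slightly more streamlined.
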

\begin{proof}
Since that the separable Krein space $\h_{W}$ is singular, the Gram operator $W$ is such that $0\in\sigma(W)$. Hence, by Theorem \ref{theoremdeco} the equality \eqref{decompositionW2} is satisfies. Define
 \begin{equation}
 \h_{\psi_{n}}^{W}:=\U\h_{\psi_{n}},\quad\forall n\in\N\cup\{\infty\}.
 \end{equation}
We claim to show  $\h_{\psi_{n}}^{W}\simeq\Ele(\sigma(W),x\,d\mu_{n}(x))$. Thence,  fixed $n\in\N\cup\{\infty\}$, let $\Ele\left(\sigma(W),d\mu_{n}\right)$ be a Hilbert space,  where $\mu_{n}$ is a Lebesgue measure. On it Hilbert space we define the bounded and self-adjoint operator given by \linebreak$\left(W_{x}f\right)(x)=xf(x)$. It linear operator is such that $\ker W_{x}=\{0\}$ due to  $\mu_{n}\left(\mathrm{Id}_{\sigma(W)}^{-1}\{0\}\right)=0$, where $\mathrm{Id}_{\sigma(W)}x=x$. In effect, if  $\left(W_{x}f\right)(x)=0$,  for all $x\in\sigma(W)$, then given $\varepsilon\in\N$,  we take the measurable's sets $$M_{\varepsilon}=\left\{x\in\sigma(W):|f(x)|>\varepsilon\right\},$$ and we obtain
\begin{align*}
 0&=\|W_{x}f\|^{2}=\int_{\sigma(W)\setminus\mathrm{Id}_{\sigma(W)}^{-1}\left(\{0\}\right)}|f(x)|^{2}|x|^{2}d\mu_{n},\quad\text{}\\
 &=\int_{M_{\varepsilon}}|f(x)|^{2}|x|^{2}d\mu_{n}(x)\geq\varepsilon^{2}\int_{M_{\varepsilon}\setminus\mathrm{Id}_{\sigma(W)}^{-1}\left(\{0\}\right)}|x|^{2}d\mu_{n}(x).
 \end{align*}
i.e $\mu_{n}\left(M_{\varepsilon}\right)=0,\quad\forall \varepsilon\in\N$. Consequently, if $$M^{+}=\left\{x\in\sigma(W):|f(x)|>0\right\}=\bigcup_{m\in\N}M_{\frac{1}{m}},$$ then $\mu_{n}\left(M^{+}\right)=0.$
Thus, we conclude $|f|=0$ almost everywhere in $\sigma(W)$. i.e., $f=0$ almost everywhere in $\sigma(W)$.

Now, if on the Hilbert space $\Ele(\sigma(W),d\mu_{n}(x))$  we take the Gram operator $W_{x}$, then
$$\overline{\Ele(\sigma(W), d\mu_{n}(x))}^{\|\cdot\|_{J}}\simeq \Ele(\sigma(W),x\, d\mu_{n}(x))$$
is a singular Krein space, where
$$\Ele(\sigma(W),x d\mu_{n}(x)):=\left\{f\in\Ele(\sigma(W),d\mu_{n})\,:\int_{\sigma(W)}|f(x)|^{2}|x|\;d\mu_{n}(x)<\infty\right\}$$
and $\|f\|_{J}^{2}=[f,f]_{J}=\langle |W_{x}|f,f\rangle=\int_{\sigma(W)}|f(x)|^{2}\left|x\right|\,d\mu_{n}(x)$.

On the other hand, Let $F:\Ele\left(\sigma(W),d\mu_{n}\right)\longrightarrow \Ele\left(\sigma(W),\, x\,d\mu_{n}(x)\right)$ be a linear operator given by $\displaystyle(Fg)(x)=\frac{g(x)}{\psi(x)}$, where the function $\psi(x)$ is measurable and $|\psi(x)|^{2}=|x|$ almost everywhere in $\sigma(W)$.  For $n\in\N\cup\{\infty\}$ fixed is satisfies that $\mu_{n}\left(|\psi|^{-1}\left\{0\right\}\right)=\mu_{n}\left(|\mathrm{Id}_{\sigma(W)}|^{-1}\left\{0\right\}\right)=0$, consequently
$$\|Ff\|_{J}^{2}=\int_{\sigma(W)}|f(x)|^{2}|\psi^{-1}(x)|^{2}|x|\,d\mu_{n}(x)=\int_{\sigma(W)}|f(x)|^{2}\,d\mu_{n}(x)=\|f\|^{2}.$$
In addition, the linear operator $F$ has inverse  which is  well defined and is given by $\left(F^{-1}f\right)(x)=\psi(x)f(x).$ i.e., $F$ is an invertible operator.

In conclusion,  the theorem is proved from the diagram
\begin{equation}
  \xymatrix{
  \ar[d]_{\U} \h_{\psi_{n}}\ar[r]^{\hspace{-1cm}G}& \ar[d]^{F}\Ele(\sigma(W),d\mu_{n}(x))\\
\h_{\psi_{n}}^{W}\ar[r]&\Ele\left(\sigma(W),x\,d\mu_{n}(x)\right),
}
\end{equation}
where $G$ is an invertible operator defined in $\h_{\psi_{n}}$ on $\Ele(\sigma(W),d\mu_{n}(x))$.\qedhere
\end{proof}

 \subsubsection{The case:  Gram operator $W$ is unbounded. }\label{frameswmetricunbounded}

Unfortunately,  we cannot obtain  similar results when the Gram operator $W$ on Hilbert space $\h$ is unbounded. Mainly because the tools used are based on specific properties and results given for the $C^{*}$-algebra $\BH$. In the paper \cite{KEFER}  was studied  the behavior of frames in this case. Next we use such results to show the behavior of frames of subspaces of Hilbert spaces with $W$-metric where Gram operator $W$ is unbounded. For more details see \cite{KEFER}.

The Gram operator $W$ is well defined with dense domain  $\mathcal{D}_{W}\subsetneq\h$. Hence, the  $W-$metric $[\cdot,\cdot]=\langle W\cdot,\cdot\rangle$  just is defined to $x,y\in\mathcal{D}_{W}=\mathcal{D}_{W^{*}}$. The polar decomposition ($W=J|W|$) allows us to define
$$[x,y]_{J}:=\langle |W|x,y\rangle,\quad\forall x,\,y\in\mathcal{D}_{W},$$
 and by Proposition \ref{propencajehilbertkrein} we get
\begin{equation}
\h_{W}:=\overline{\mathcal{D}_{W}}^{\|\cdot\|_{J}}.
\end{equation}
\begin{proposition}\label{propconteoequinoaco}
Let $\{x_{i}\}_{i\in I}\in \ell_{+}^{\infty}(I)$. Consider a partition   $\{J_{i}\}_{i\in I}$  of $I$ such that
$I=\bigsqcup_{i\in I}J_{i}$ and $\{k_{i,j}\}_{j\in J_{i}}$ a sequence of frame for the Hilbert space $\left(\h.\langle\cdot,\cdot\rangle\right)$ with frame bounds $A_{i}, B_{i}>0$. We assume $V_{i}=span_{j\in J_{i}}\{k_{i,j}\}$  and $0<A=\inf_{i\in I}A_{i}\leq B=\sup_{i\in I}B_{i}$ for all $i\in I$ . Hence,   if $\{x_{i},V_{i}\}_{i\in I}$ is a frame of subspace for the Hilbert space $\left(\h.\langle\cdot,\cdot\rangle\right)$, then $\{x_{i},V_{i}\}_{i\in I}$ is  not a frame of subspace for the Krein space $\h_{W}.$
\end{proposition}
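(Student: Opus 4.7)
The plan is to reduce this statement to the corresponding result for frames of vectors already established in \cite{KEFER}, using the frame-of-subspace to frame-of-vectors equivalence of the kind given in Proposition \ref{propconteoequi}.

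First, I would unpack the hypothesis: for every $i \in I$, the system $\{k_{i,j}\}_{j\in J_i}$ is a frame for the subspace $V_{i}$ with frame bounds $A_i, B_i$, and by assumption $0 < A = \inf_i A_i \leq \sup_i B_i = B$. Pick, for each $i$, an orthonormal basis $\{e_{i,j}\}_{j\in J_i}$ of $V_{i}$ in the Hilbert sense. By the Hilbert-space analogue of Proposition \ref{propconteoequi} (see \cite{CG}), the assertion that $\{x_i, V_i\}_{i\in I}$ is a frame of subspace for $(\h,\langle\cdot,\cdot\rangle)$ is equivalent to the assertion that the collection of vectors $\{x_i e_{i,j}\}_{i\in I,\,j\in J_i}$ forms a frame of vectors for $(\h, \langle\cdot,\cdot\rangle)$. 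Thus the hypothesis converts into a standard frame-of-vectors statement in $\h$.

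Second, I would argue by contradiction. Suppose $\{x_i, V_i\}_{i\in I}$ were a frame of subspace for the Krein space $\h_W$. Applying Proposition \ref{propconteoequi} in the Krein setting, this would imply that $\{x_i e_{i,j}\}_{i\in I,\,j\in J_i}$ is simultaneously a frame of vectors for $\h_W$. Now I would invoke the main result of \cite{KEFER} concerning the unbounded Gram operator: when $W$ is unbounded, no frame of vectors for the Hilbert space $\h$ can also be a frame of vectors for the Krein space $\h_W$. This gives the desired contradiction.

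The main obstacle I foresee lies in the second step: carrying Proposition \ref{propconteoequi} over to the unbounded regime. In the bounded case the proof uses the equivalence between frames in $\h_W$ and frames in the Hilbert space $(\h_W, [\cdot,\cdot]_J)$; here $\h_W$ is only the completion $\overline{\mathcal{D}_W}^{\|\cdot\|_J}$, and the norms $\|\cdot\|$ and $\|\cdot\|_J$ are not comparable in either direction. One therefore has to verify that a $J$-orthonormal basis of $V_i$ (in the Krein sense) behaves as expected and that the Parseval-type identity $\sum_j |[k, e_{i,j}]_J|^2 = \|Q_{V_i}k\|_J^2$ remains available for every $k \in \h_W$. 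Once this technical point is settled, the argument becomes a clean reduction to the vectorial unbounded case from \cite{KEFER}.
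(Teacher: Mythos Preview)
Your proposal is correct and follows essentially the same route as the paper: convert the frame-of-subspaces hypothesis in $\h$ into a frame of vectors via the equivalence of \cite{CG}/Proposition~\ref{propconteoequi}, invoke \cite{KEFER} to see that this vector family cannot be a frame for $\h_{W}$ when $W$ is unbounded, and then use Proposition~\ref{propconteoequi} (contrapositively) in the Krein setting to conclude. The only cosmetic difference is that the paper works directly with the given vectors $\{x_{i}k_{i,j}\}$ rather than passing to an orthonormal basis $\{x_{i}e_{i,j}\}$, and phrases the last step as a direct contrapositive rather than an explicit contradiction; the paper likewise applies Proposition~\ref{propconteoequi} in $\h_{W}$ without addressing the technical point you flag, so your caution there is well placed but does not separate the two arguments.
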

\begin{proof}
   We have that $\{x_{i}k_{i,j}\}_{i\in I, j\in J_{i}} $  is a frame for the Hilbert space $\h$ (case Hilbert space see \cite{CG}), but in \cite{KEFER} was proved that $\{x_{i}k_{i,j}\}_{j\in J_{i}}$ is not a frame for the Krein space $\h_{W}$ when the Gram operator $W$ is unbounded. Thus, by Proposition \ref{propconteoequi}, the family $\{x_{i},V_{i}\}_{i\in I}$ is  not a frame of subspace for the Krein space $\h_{W}.$
\end{proof}
\begin{theorem}
Let $\h_{W}$  be a  Krein space where the Gram operator $W$ is unbounded and  $0\notin\sigma(W)$. Let $G:\,\mathcal{D}_{G}\subset\h_{W}\rightarrow\h$ be given by $G=\sqrt{|W|}$, the following statements hold.
\begin{itemize}
\item [$i).$] If $\{x_{i},V_{i}\}_{i\in I}$ is  frame of subspace for the Hilbert space $\left(\h,\langle\cdot,\cdot\rangle\right)$, then $\{x_{i},G^{-1} V_{i}\}_{i\in I}$ is a frame of subspace for the singular Krein space $\h_{W}.$
\item [$ii).$] If $\{x_{i},V_{i}\}_{i\in I}$ is a frame of subspace for the singular Krein space $\h_{W}$, then $\{x_{i},G\,V_{i}\}_{i\in I}$  is a frame for the Hilbert space $\left(\h,\langle\cdot,\cdot\rangle\right)$.
    \end{itemize}
 \end{theorem}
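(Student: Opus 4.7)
The plan is to mirror the strategy used in Theorem \ref{singularframe}, namely to promote $G=\sqrt{|W|}$ to a unitary between the Hilbert space $\h$ and the Hilbert space $(\h_W,[\cdot,\cdot]_J)$, and then transfer the frame of subspaces through this unitary by invoking Propositions \ref{marenmar} and \ref{teorequimarsub}. The essential new ingredient is to handle the domain questions arising from $W$ being unbounded, which is exactly what forces $G$ to be defined initially only on $\mathcal{D}_G=\mathcal{D}_{\sqrt{|W|}}\cap\h$.

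First, I would verify that $G$ is an isometry from $(\mathcal{D}_G,\|\cdot\|_J)$ into $(\h,\|\cdot\|)$. Indeed, using the polar decomposition $W=J|W|$ and the definition of the $J$-inner product on the unbounded setting given just before the statement, for every $x\in\mathcal{D}_G$ we have
\begin{equation*}
\|Gx\|^{2}=\langle\sqrt{|W|}\,x,\sqrt{|W|}\,x\rangle=\langle |W|x,x\rangle=[x,x]_{J}=\|x\|_{J}^{2}.
\end{equation*}
Since $\mathcal{D}_G$ is dense in $\h_W$ (here I use that $\mathcal{D}_W\subset\mathcal{D}_G$ and that $\mathcal{D}_W$ is dense in $\h_W$ by construction of the completion), $G$ extends uniquely to an isometry $\widehat{G}\colon \h_W\to\h$.

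Next, the assumption $0\notin\sigma(W)$ means that $|W|^{-1}$ is a bounded operator on $\h$, hence $\sqrt{|W|}$ is an unbounded self-adjoint operator with bounded inverse and therefore has range equal to $\h$. This shows $\widehat{G}$ is surjective, and combined with the isometric property yields that $\widehat{G}$ is a unitary operator from the Hilbert space $(\h_W,[\cdot,\cdot]_J)$ onto $(\h,\langle\cdot,\cdot\rangle)$, with bounded inverse $\widehat{G}^{-1}$ extending $G^{-1}=\sqrt{|W|}^{-1}$. For item $i)$, apply Proposition \ref{marenmar} with $\U=\widehat{G}^{-1}$ to convert a frame of subspaces for $\h$ into a frame of subspaces for the Hilbert space $(\h_W,[\cdot,\cdot]_J)$, and then invoke Theorem \ref{teorequimarsub} to obtain a frame of subspaces for the Krein space $\h_W$ itself; for item $ii)$, apply the same transport in the opposite direction with $\U=\widehat{G}$.

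The main obstacle is handling the domain of $G$ carefully in the unbounded setting: one must argue that $\mathcal{D}_G$ is $\|\cdot\|_J$-dense in $\h_W$ (so that the isometric extension $\widehat{G}$ exists) and that the equalities $Gk=\sqrt{|W|}k$ and $Q_{GV_i}=\widehat{G}\,Q_{V_i}\,\widehat{G}^{-1}$ survive passing to the closure. Once the extension is established, the frame inequalities transfer through $\widehat{G}$ without further analytic difficulty because, on the extended domain, $\widehat{G}$ genuinely intertwines the orthogonal (and hence $J$-orthogonal) projections onto $V_i$ and $\widehat{G}^{\pm 1}V_i$, so the sum $\sum_i x_i^2\|Q_{\,\cdot\,}k\|_J^2$ is preserved up to the unitary change of variable exactly as in the bounded case treated in Theorem \ref{singularframe}.
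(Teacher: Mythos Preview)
Your proposal is correct and follows essentially the same route as the paper: establish that $G=\sqrt{|W|}$ furnishes an invertible (in fact unitary) map between $(\h_W,[\cdot,\cdot]_J)$ and $(\h,\langle\cdot,\cdot\rangle)$, then transport the frame of subspaces via Proposition~\ref{marenmar} (together with Theorem~\ref{teorequimarsub}). The only difference is that the paper's proof is a two-line appeal to \cite{KEFER} for the invertibility of $G$, whereas you spell out the isometry computation, the density of $\mathcal{D}_G$ in $\h_W$, and the surjectivity coming from $0\notin\sigma(W)$; this added detail is appropriate and does not constitute a different approach.
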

 \begin{proof}
Since that $0\in\rho(W)$, the linear operator $G$ is invertible ( See \cite{KEFER}). Thus, by Proposition \ref{marenmar}, the proof is hold.
 \end{proof}

When the Gram operator $W$ is unbounded with $0\in\sigma(W)$ we get an analogous result as Theorem \ref{singularframe}.
\begin{theorem}
 Let $\h_{W}$  be a  Krein space where the Gram operator $W$ is unbounded with $0\in\sigma(W)$. There is an invertible operator $\mathcal{U}:\h\rightarrow\h_{W}$ such that:
 \begin{itemize}
 \item [$i).$] If $\{x_{i},V_{i}\}_{i\in I}$ is a frame of subspace for the Hilbert space $\left(\h,\langle\cdot,\cdot\rangle\right)$, then $\{x_{i},\mathcal{U} V_{i}\}_{i\in I}$ is a frame of subspace for the singular Krein space $\h_{W}.$
\item [$ii).$] If $\{x_{i},V_{i}\}_{i\in I}$ is a frame of subspace for the singular Krein space $\h_{W}$, then $\{x_{i},\mathcal{U}^{-1}V_{i}\}_{i\in I}$  is a frame for the Hilbert space $\left(\h,\langle\cdot,\cdot\rangle\right)$.
    \end{itemize}
\end{theorem}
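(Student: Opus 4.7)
My approach is to mirror the construction used in Theorem \ref{singularframe}, replacing the bounded operator $\sqrt{|W|}$ by its densely defined counterpart and extending it carefully. Set $G=\sqrt{|W|}$; by the polar decomposition, $G$ is a (generally unbounded) positive self-adjoint operator on $\h$ with $\ker G=\{0\}$ (inherited from $\ker W=\{0\}$), whose domain contains $\mathcal{D}_{W}=\mathcal{D}_{|W|}$ densely. The crucial identity, valid for $k\in\mathcal{D}_{W}$,
$$\|Gk\|^{2}=\langle Gk,Gk\rangle=\langle |W|k,k\rangle=[k,k]_{J}=\|k\|_{J}^{2},$$
shows that $G$ acts as an isometry from $(\mathcal{D}_{W},\|\cdot\|_{J})$ into $(\h,\|\cdot\|)$.

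Next I would use the defining equality $\h_{W}=\overline{\mathcal{D}_{W}}^{\|\cdot\|_{J}}$ recalled at the beginning of subsection \ref{frameswmetricunbounded} to extend $G$ by continuity to an isometry $\widehat{G}:\h_{W}\to\h$. To upgrade this isometry to a unitary I need the range of $G$ (on $\mathcal{D}_{W}$) to be dense in $\h$. This is precisely the kind of statement established in \cite{KEFER} for unbounded Gram operators, and it rests on $\ker |W|=\{0\}$ together with the spectral calculus of the positive self-adjoint operator $|W|$; I would invoke it directly rather than redoing it here.

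Finally, I would define $\mathcal{U}:=\widehat{G}^{-1}:\h\to\h_{W}$, which is then an invertible (in fact unitary) operator between the Hilbert spaces $(\h,\langle\cdot,\cdot\rangle)$ and $(\h_{W},[\cdot,\cdot]_{J})$. Viewing $\h$ as a Krein space with trivial fundamental symmetry $J_{\h}=I$, Proposition \ref{marenmar} applies and transports frames of subspaces in both directions: $\{x_{i},V_{i}\}_{i\in I}$ is a frame of subspace for $\h$ if and only if $\{x_{i},\mathcal{U}V_{i}\}_{i\in I}$ is a frame of subspace for $\h_{W}$. This yields statements $i)$ and $ii)$ simultaneously.

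The main obstacle is the density-of-range step for $\widehat{G}$, since $G$ is unbounded and one cannot simply appeal to continuity on all of $\h$; this is where the proof genuinely differs from the bounded case of Theorem \ref{singularframe}. Once that step is granted via \cite{KEFER}, the rest of the argument is a direct application of Proposition \ref{marenmar}.
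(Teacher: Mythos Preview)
Your proposal is correct and follows essentially the same approach as the paper: define $G=\sqrt{|W|}$, cite \cite{KEFER} for the existence of the unitary extension $\widehat{G}:\h_{W}\to\h$, take $\mathcal{U}=\widehat{G}^{-1}$, and apply Proposition~\ref{marenmar}. The paper's own proof is terser (it invokes \cite{KEFER} directly for the unitary extension without rehearsing the isometry identity or the density-of-range issue you flag), but the argument is the same.
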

\begin{proof}
 In \cite{KEFER} was proved that for $0\in\sigma(W)$,  the linear operator $G:\,\mathcal{D}_{G}\subset\h_{W}\rightarrow\h$, which is given by $G=\sqrt{|W|}$,  has an unique unitary extension $U:=\widehat{G}:\,\h_{W}\rightarrow\h.$ Thence by Proposition \ref{marenmar} the statement is hold.
\end{proof}
\section{Final remarks}\label{remark}
In this paper we study Hilbert spaces with bounded Gram operator. When the Gram operator $W$ is unbounded, our main  result is not satisfied whether $\h$ cannot be decomposed. In such case, we can only state that if $\h$ satisfies \eqref{decompositionW} in some way, then the decomposition \eqref{decompositionW3} is  obtained  to $\h_{W}$.\\

The following open questions arose during the writing of this paper.
\begin{enumerate}
  \item [$i).$]  Is it possible to solve  differential equations with frames on some Hilbert or Krein spaces?
  \item [$ii).$] In \cite{AH,AH2,AHMRW} were studied relations between differential Galois theory and the solution of the Schr\"odinger equation over separable Hilbert space ($L_{2}$),  is it possible to obtain similar results in the context of Krein and Hilbert frames subspaces?.
  \item [$iii).$] Is it possible to study  partial differential equations with frames on some Hilbert or Krein spaces?
\item [$iv).$]  How can we write a quantum mechanics formalism in the context on frame of subspaces in Hilbert or Krein  spaces?
\item [$v.)$] How can we use Banach algebras instead of Hilbert spaces to study frame theory?
\item [$vi).$] How can we relate the theory of frame with Weyl $C^{*}$-algebra?
\item [$vii).$] What happens in the case of tensor product on vector space in frame theory?
\end{enumerate}
\section*{Acknowledgements}
Thank  P. G\u{a}vru\c{t}a for his observations and recommendations. The first author is partially supported by the MICIIN/FEDER grant number $MTM2009–06973$, by the Generalitat de Catalunya grant number $2009SGR859$ and by DIDI – Universidad del Norte. The second author is supported by CONACYT and CINVESTAV. The third author is supported by Universidad Surcolombiana.

\bibliographystyle{model1c-num-names}

\end{document}